\newtheorem{theorem}{Theorem}[section]
\newtheorem{definition}{Definition}[section]
\newtheorem{proposition}{Proposition}
\numberwithin{equation}{section}
\begin{document}
	\title{On some new families of $k$-Mersenne and generalized $k$-Gaussian Mersenne numbers and their polynomials}
	\author {Munesh Kumari$^{1}$\footnote{E-mail: muneshnasir94@gmail.com, }, Jagmohan Tanti$^{2}$\footnote{E-mail: jagmohan.t@gmail.com}, Kalika Prasad$^{3}$\footnote{E-mail: klkaprsd@gmail.com}
		%\indent ORCiD ID: https://orcid.org/0000-0002-6541-0284/
		\\\normalsize{$^{1,3}$\small Department of Mathematics, Central University of Jharkhand, India, 835205} 
		\\\normalsize{$^{2}$\small Department of Mathematics, Babasaheb Bhimrao Ambedkar University, Lucknow, India, 226025}}    
	\date{\today}
	\maketitle
	\noindent\rule{15cm}{.15pt}
	\begin{abstract}
		In this paper, we define generalized $k$-Mersenne numbers and give a formula of generalized Mersenne polynomials and further we study their properties.  Moreover, we define Gaussian Mersenne numbers and obtain some identities like Binet Formula, Cassini's identity, D'Ocagne's Identity, and generating functions. The generalized Gaussian Mersenne numbers are described and the relation with classical Mersenne numbers are explained. We also introduce a generalization of Gaussian Mersenne polynomials and establish some properties of these polynomials.
	\end{abstract}
	\noindent\rule{15cm}{.15pt}
	\\\textit{\textbf{Keyword:} Mersenne Sequence, Gaussian Mersenne numbers, $k$-Mersenne numbers Sequences, Gaussian Mersenne polynomials }
	\\\textit{\textbf{Mathematics Subject Classifications:} 11B37, 11B39, 11B83}
%%%%%%%%%%%%%%%%%%%%%%%%%%%%%%%%%%%%%%%%%%%%%%%%%%%%%%%%%%%%%%%%%%%%%%%%%%%%%%%%
	\section{Introduction}
	Mersenne sequence$\{M_{n}\}$\cite{catarino2016mersenne} is given by the recurrence relation,
	\begin{equation}\label{mersene}
		M_{n+2} = 3M_{n+1} -2M_{n}, \hspace{.8cm}n\geq 0,~with~~M_{0}=0,~~M_{1}=1,
	\end{equation}
	and the terms $M_{n}$ of this sequence are known as Mersenne numbers.
	Characteristic equation corresponding to above recurrence relation is 	\begin{equation}\label{char}
		\lambda^{2}-3\lambda+2\lambda=0
	\end{equation}
	And, Binet formula for the Mersenne numbers is given by 
	\begin{equation}\label{Binet}
		M_{n}=\lambda_{1}^{n}-\lambda_{2}^{n},
	\end{equation}
	where $\lambda_{1}$ and $\lambda_{2}$ are the roots of the characteristic equation (\ref{char}).
	
	Recall that, in number theory Mersenne numbers($M_n$) are sequences of integers of the form $2^n-1$ for non-negative integers n, which can be also obtained by its Binet formula.
	\\In this paper, we study and generalize one of the recursive sequences of integers satisfying a recurrence relation and we give corresponding polynomials, some well-known identities for this type of sequence.
	Some well-known recursive sequences are Fibonacci, Lucas, Horadam, Pell, Perrin, Fibonacci-Lucas, Jacobsthal, ..etc that are studied over the years and still are a great area of interest for generalization and their applications in other disciplines like cryptography, coding theory, matrix theory..etc.
	Horadam\cite{horadam1961generalized,horadam1963complex} introduced the concept of Gaussian Fibonacci numbers and complex Fibonacci numbers. Further, Jordan\cite{jordan1965gaussian} considered Gaussian Fibonacci numbers and Lucas numbers and extended classical relations which are known for Fibonacci numbers and Lucas numbers. Moreover, studies on the different Gaussian sequences like Gaussian Fibonacci, Gaussian Lucas, Gaussian Pell, Gaussian Pell-Lucas, and their polynomials can be found in the papers\cite{ozkan2020gauss,catarino2018note,halici2016some,yagmur2018gaussian}. Also, some work in the direction of generalization of the recursive sequences like Lucas, Pell, Horadam, ...etc has been done in \cite{ozkan2021new,numbers2021new,yilmaz2022new}. Kalika et.al\cite{prasad2021cryptography} discussed the generalization of higher-order Fibonacci sequences and shows their application in cryptography as a key matrix.	
	Construction of identities related to Mersenne numbers and generalized Mersenne numbers and study of their properties have been studied in papers\cite{boussayoud2018some,catarino2016mersenne,frontczak2020mersenne,chelgham2021k,uslu2017some} using generating functions and matrix methods.
	\begin{theorem}[Cassini Identity\cite{uslu2017some}]
	For $n \geq 1$,
		\begin{equation}\label{Cassini}
			M_{n}^{2}-M_{n+1}M_{n-1}=2^{n-1}.
		\end{equation}
	\end{theorem}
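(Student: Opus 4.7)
The plan is to prove the identity directly from the Binet formula (\ref{Binet}). First I would fix the evident typo in the characteristic equation (\ref{char}), which should read $\lambda^2 - 3\lambda + 2 = 0$, so the roots are $\lambda_1 = 2$ and $\lambda_2 = 1$. The two quantities I will need are the symmetric functions $\lambda_1 \lambda_2 = 2$ and $(\lambda_1 - \lambda_2)^2 = (\lambda_1+\lambda_2)^2 - 4\lambda_1\lambda_2 = 9 - 8 = 1$; keeping these in symbolic form makes the argument look cleaner and also transparently generalizes to similar Cassini-type identities for other linear recurrences.

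Next, I would substitute $M_k = \lambda_1^k - \lambda_2^k$ into $M_n^2 - M_{n+1}M_{n-1}$ and expand. The pure powers $\lambda_1^{2n}$ and $\lambda_2^{2n}$ cancel between the two products, and what remains is
\begin{equation*}
M_n^2 - M_{n+1}M_{n-1} = -2(\lambda_1\lambda_2)^n + \lambda_1^{n+1}\lambda_2^{n-1} + \lambda_1^{n-1}\lambda_2^{n+1}.
\end{equation*}
Pulling out $(\lambda_1\lambda_2)^{n-1}$ from each term gives
\begin{equation*}
(\lambda_1\lambda_2)^{n-1}\bigl(\lambda_1^2 - 2\lambda_1\lambda_2 + \lambda_2^2\bigr) = (\lambda_1\lambda_2)^{n-1}(\lambda_1-\lambda_2)^2 = 2^{n-1}\cdot 1 = 2^{n-1},
\end{equation*}
which is the desired identity.

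As a backup, induction on $n$ would work equally well: the base case $n=1$ gives $M_1^2 - M_2 M_0 = 1 - 0 = 2^0$, and for the inductive step one applies $M_{n+2} = 3M_{n+1} - 2M_n$ and the rearrangement $M_{n+1} - 3M_n = -2M_{n-1}$ to obtain $M_{n+1}^2 - M_{n+2}M_n = 2\bigl(M_n^2 - M_{n+1}M_{n-1}\bigr) = 2 \cdot 2^{n-1} = 2^n$. I do not expect a genuine obstacle here; the only thing to watch is the algebraic bookkeeping when collecting the mixed terms in the Binet expansion, specifically recognizing that they repackage as a perfect square $(\lambda_1-\lambda_2)^2$ rather than leaving the expression asymmetric.
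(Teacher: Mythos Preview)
Your proof is correct. Note, however, that the paper does not actually supply a proof of this particular theorem: it is quoted from \cite{uslu2017some} as a known identity and used later as input. So there is no ``paper's own proof'' to compare against directly.

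That said, the paper does prove the polynomial analogue (Theorem~\ref{Cassinipolymial}), and there it uses exactly your backup method: induction on $n$, expanding $M_{k+2}(x)$ via the recurrence and reducing to the inductive hypothesis. Your primary approach via the Binet formula is a genuinely different route --- shorter, and it makes transparent why the constant on the right is $(\lambda_1\lambda_2)^{n-1}(\lambda_1-\lambda_2)^2$, which immediately explains the $2^{n-1}$ and would carry over verbatim to any second-order linear recurrence. The inductive argument, by contrast, is more mechanical and does not reveal the structure, but has the advantage of not needing the closed-form Binet expression at all.
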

	\begin{theorem}[Generating function\cite{frontczak2020mersenne}]\label{generating}
	Generating function for Mersenne numbers $M_{n}$ is given by,
		\begin{equation}
			M(x)=\sum_{i=0}^{\infty}M_{i}x^{i}=\dfrac{x}{(1-3x+2x^{2})}.
		\end{equation}
	\end{theorem}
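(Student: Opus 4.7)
The plan is to exploit the recurrence \eqref{mersene} directly by multiplying the formal power series $M(x)=\sum_{i\ge 0}M_i x^i$ by the polynomial $1-3x+2x^2$ that one expects in the denominator, and showing that only the initial terms survive. Concretely, I would first write
\begin{equation*}
(1-3x+2x^2)M(x) \;=\; \sum_{i\ge 0} M_i x^i \;-\; 3\sum_{i\ge 0} M_i x^{i+1} \;+\; 2\sum_{i\ge 0} M_i x^{i+2},
\end{equation*}
and then reindex the second and third sums so that every sum runs over $x^i$ for $i\ge 2$, separating out the low-order contributions coming from $i=0$ and $i=1$.

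Next, I would collect coefficients. The $x^0$ coefficient is $M_0$, which equals $0$ by the initial condition; the $x^1$ coefficient is $M_1-3M_0=1$; and for each $i\ge 2$ the coefficient is $M_i-3M_{i-1}+2M_{i-2}$, which vanishes by the recurrence \eqref{mersene}. Hence $(1-3x+2x^2)M(x)=x$, and dividing by $1-3x+2x^2$ yields the claimed closed form. Throughout this is a formal-series manipulation, so no analytic convergence issue arises; one could alternatively note that the roots $\lambda_1=2$, $\lambda_2=1$ of \eqref{char} guarantee a nonzero radius of convergence on any disk avoiding $x=1/2$.

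As a cross-check (and an alternative proof I might mention), the Binet formula \eqref{Binet} gives $M_n=2^n-1$, so
\begin{equation*}
\sum_{i\ge 0}M_i x^i \;=\; \sum_{i\ge 0}2^i x^i - \sum_{i\ge 0}x^i \;=\; \frac{1}{1-2x}-\frac{1}{1-x} \;=\; \frac{x}{(1-x)(1-2x)},
\end{equation*}
which after expanding the denominator agrees with $x/(1-3x+2x^2)$.

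I do not foresee a real obstacle here: the argument is a standard telescoping of a linear recurrence against its characteristic polynomial. The only mild care needed is the bookkeeping of the first two terms $M_0$ and $M_1$ when reindexing the shifted sums, which is exactly what produces the numerator $x$ rather than a more complicated polynomial.
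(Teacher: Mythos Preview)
Your argument is correct and complete; the manipulation $(1-3x+2x^{2})M(x)=M_0+(M_1-3M_0)x=x$ is exactly the standard derivation, and the Binet cross-check is a nice redundancy. Note that the paper itself does not supply a proof of this particular theorem (it is quoted from \cite{frontczak2020mersenne}), but when the authors later derive the generating function for the Gaussian Mersenne numbers they use precisely your method of multiplying $GM(z)$ by $1-3z+2z^{2}$ and reading off the surviving initial terms, so your approach aligns with theirs.
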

%	In this paper we will define $k$-Mersenne numbers and Gaussian $k$-Mersenne numbers and their polynomials respectively. Further we will obtain some properties like Binet Formula, Cassini's identity, D'Ocagne's Identity, and generating functions for Gaussian Mersenne numbers and their polynomials. 
  This paper is organized as follows. In section 2, we define generalized $k$-Mersenne numbers and their polynomials and established relations between classical Mersenne numbers and generalized $k$-Mersenne numbers. In section 3, we introduce Gaussian Mersenne numbers and obtain some identities like Binet Formula, Cassini's identity, D'Ocagne's Identity, and generating functions related to these numbers. And at last, we define $k$-generalized Gaussian Mersenne numbers and discussed their properties, and obtain some identities involving Mersenne numbers.
%%%%%%%%%%%%%%%%%%%%%%%%%%%%%%%%%%%%%%%%%%%%%%%%%%%%%%%%%%%%%%%%%%%%%%%%%%%%%%%%%%%%%  
	\begin{center}
		\section{Main Work}
	\end{center}
	\subsection{Generalized $k$-Mersenne numbers(GMN($k$))}
	\begin{definition}\label{l-mersenne}
		Let $n,k\in \mathbb{N}$, then $\exists !$ $s,r\in \mathbb{N}$ such that $n=sk+r$, $0 \le r <k$. Then generalized $k$-Mersenne numbers $M_{n}^{(k)}$ are defined as 
		\begin{equation}
			M_{n}^{(k)}=(\lambda_{1}^{s}-\lambda_{2}^{s})^{k-r}(\lambda_{1}^{s+1}-\lambda_{2}^{s+1})^{r},~~~~~n=sk+r
		\end{equation}
		where $\lambda_{1}$ and $\lambda_{2}$ are the roots of the eqn.(\ref{char}).
	\end{definition}
%	Also, we can find generalized l-Mersenne numbers by matrix method given as 
%	\begin{equation}
%		M_{n}^{k-1}X^{n} = 
%		\begin{bmatrix}
%			M_{kn+1}^{(l)} & -2M_{kn}^{(l)}  \\ 
%			M_{kn}^{(l)} & -2M_{kn-1}^{(l)} \\
%		\end{bmatrix}
%	\end{equation}
%	, where $X^{n}=\begin{bmatrix}
%		M_{n+1} & -2M_{n}  \\ 
%		M_{n}   & -2M_{n-1} \\
%	\end{bmatrix}$.
	Some generalized $k$-Mersenne numbers $M_{n}^{(k)}$ are shown in the following table.\\\\
	\begin{table}[h!]
		\centering
		%\begin{center}
			\begin{tabular}{c c c c c c c}
				\hline 
				$M_{n}^{(k)}$ & k=1 & k=2  &  k=3 &  k=4 & k=5   \\
				\hline
				$M_{0}^{(k)}$ & 0 & 0  &  0 &  0 & 0   \\
				
				$M_{1}^{(k)}$  &  1 & 0  &  0 &  0 & 0   \\
				
				$M_{2}^{(k)}$ & 3 & 1  &  0 &  0 & 0    \\
				
				$M_{3}^{(k)}$& 7 & 3  &  1 &  0 & 0   \\
				
				$M_{4}^{(k)}$ & 15 & 9  &  3 &  1 & 0   \\
				
				$M_{5}^{(k)}$& 31 & 21  &  9  &  3 &  1  \\
				\hline	
			\end{tabular}
		\caption{List of some generalized $k$-Mersenne numbers($M_{n}^{(k)}$)} 
%		\end{center}
	\end{table}\\
	From eqn.(\ref{Binet}) and definition(\ref{l-mersenne}), the generalized $k$-Mersenne numbers and Mersenne numbers are related as 
	\begin{equation}\label{relation}
		M_{n}^{(k)}=M_{s}^{k-r}M_{s+1}^{r},~~~n=sk+r.
	\end{equation}
	If $k=1$ then $r=0$ and hence $n=s$. So, from eqn.(\ref{relation}) we have $M_{n}^{(1)}= M_{n}$.
	\\From the above derivations, we have noted the following identities showing relations between generalized $k$-Mersenne numbers and Mersenne numbers for $k=2,3$.
	\begin{enumerate}
		\item $ M_{2n}^{(2)}=M_{n}^{2}$.
		\item $ M_{2n+1}^{(2)}=M_{n}M_{n+1}$.
		\item $ M_{2n+1}^{(2)}=3M_{2n}^{(2)}-2M_{2n-1}^{(2)}$.
		\item $ M_{3n}^{(3)}=M_{n}^{3}$.
		\item $ M_{3n+1}^{(3)}=M_{n}^{2}M_{n+1}$.
		\item $ M_{3n+1}^{(3)}=3M_{3n}^{(3)}-2M_{3n-1}^{(3)}$.
		\item $ M_{3n+2}^{(3)}=M_{n}M_{n+1}^{2}$.	
	\end{enumerate} 
	 \begin{proposition}\label{obsr}
	   Let $k,s \in \mathbb{N}$ then $ M_{sk}^{(k)}=M_{s}^{k}$.
	 \end{proposition}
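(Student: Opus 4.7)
The proposition is essentially a direct substitution into the definition, so I would organize the proof as a very short verification rather than anything requiring machinery. The plan is to apply Definition~\ref{l-mersenne} with the specific index $n = sk$, and then invoke the Binet formula (\ref{Binet}) to rewrite the resulting expression in terms of classical Mersenne numbers.

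First I would observe that when $n = sk$, the unique decomposition $n = s'k + r$ with $0 \le r < k$ must have $s' = s$ and $r = 0$, simply by uniqueness of the quotient–remainder pair. Substituting $r = 0$ into the defining formula
\begin{equation*}
    M_n^{(k)} = (\lambda_1^{s} - \lambda_2^{s})^{k-r}(\lambda_1^{s+1} - \lambda_2^{s+1})^{r}
\end{equation*}
collapses the second factor to $1$, leaving $M_{sk}^{(k)} = (\lambda_1^{s} - \lambda_2^{s})^{k}$.

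Then I would apply the Binet formula $M_s = \lambda_1^{s} - \lambda_2^{s}$ from equation (\ref{Binet}) to conclude $M_{sk}^{(k)} = M_s^{k}$. Alternatively, the same conclusion drops out of the already-stated identity (\ref{relation}) by setting $r = 0$; mentioning this gives a second, even more immediate derivation.

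There is essentially no main obstacle here: the proposition is a corollary of the definition once one notes the uniqueness of $(s,r)$ in the division of $sk$ by $k$. The only thing to be slightly careful about is the edge case $s = 0$, where both sides equal $0$ (since $M_0 = 0$), and this is consistent with the table entries in the first row. So the proof will be just a few lines combining the definition, the observation $r = 0$, and the Binet formula.
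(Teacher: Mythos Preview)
Your proposal is correct and matches the paper's approach: the paper simply notes that $n=sk$ forces $r=0$ and then applies equation~(\ref{relation}) to get $M_{sk}^{(k)}=M_s^{k-0}M_{s+1}^0=M_s^k$. Your route through Definition~\ref{l-mersenne} and the Binet formula is just the unpacked version of the same step, and you already mention the shortcut via~(\ref{relation}) yourself.
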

	\begin{proof}
	 For $n=sk$, $r=0$.
		\\So, from eqn.(\ref{relation}) we have $$M_{sk}^{(k)}=M_{s}^{k-0}M_{s+1}^{0}=M_{s}^{k}.$$
	\end{proof}
	 \begin{theorem}
		For $n,s \in \mathbb{N}$, $ M_{sn+1}^{(s)}=3M_{sn}^{(s)}-2M_{sn-1}^{(s)}$.
	\end{theorem}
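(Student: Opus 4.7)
The plan is to reduce the identity to the classical Mersenne recurrence \eqref{mersene} by expanding both sides via the product formula \eqref{relation}. The first step is to perform division-with-remainder of each of the three indices by $k = s$: writing $sn-1 = (n-1)s + (s-1)$, $sn = n\cdot s + 0$, and $sn+1 = n\cdot s + 1$, with all remainders satisfying $0 \le r < s$ provided $s \ge 2$ (the case $s=1$ is immediate since $M^{(1)} = M$ and the statement literally becomes \eqref{mersene}). The uniqueness of the decomposition in Definition~\ref{l-mersenne} then pins down $(a,r)$ in each case.

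Plugging these decompositions into \eqref{relation} gives the three closed forms
\[
M_{sn-1}^{(s)} = M_{n-1}\, M_n^{\,s-1}, \qquad M_{sn}^{(s)} = M_n^{\,s}, \qquad M_{sn+1}^{(s)} = M_n^{\,s-1}\, M_{n+1}.
\]
Substituting these into the right-hand side of the claimed identity, the common factor $M_n^{\,s-1}$ factors out, leaving $M_n^{\,s-1}\bigl(3M_n - 2M_{n-1}\bigr)$. By the defining recurrence \eqref{mersene} the bracket equals $M_{n+1}$, and so the right-hand side coincides with $M_n^{\,s-1} M_{n+1} = M_{sn+1}^{(s)}$, as required.

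There is essentially no obstacle here: the identity is a direct consequence of the product formula \eqref{relation} combined with the underlying recurrence for $M_n$. The only point worth careful attention is the remainder bookkeeping---making sure that $s-1$ and $1$ are both admissible remainders modulo $s$---which is precisely why the edge case $s=1$ deserves a separate (trivial) mention. No deeper structural idea is needed; the whole argument is a one-line calculation once the three indices are correctly unpacked.
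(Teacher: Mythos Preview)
Your argument is correct and follows essentially the same route as the paper: expand each of $M_{sn-1}^{(s)}$, $M_{sn}^{(s)}$, $M_{sn+1}^{(s)}$ via the product formula \eqref{relation}, factor out $M_n^{s-1}$, and invoke the classical recurrence \eqref{mersene}. If anything, your version is more careful than the paper's, since you explicitly justify the remainder decompositions and separately dispose of the edge case $s=1$.
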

	\begin{proof}
		From eqn.(\ref{mersene}) and eqn.(\ref{relation}), we have
		\begin{eqnarray}
			3M_{sn}^{(s)}-2M_{sn-1}^{(s)} 
			&=& 3M_{n}^{s}-2M_{n-1}M_{n}^{s-1} \nonumber\\
			&=& M_{n}^{s-1}(3M_{n}-2M_{n-1})\nonumber\\
			&=& M_{n}^{s-1}M_{n+1} \nonumber\\
			&=& M_{sn+1}^{(s)}. \nonumber
		\end{eqnarray}
	\end{proof}
	\begin{theorem}
		For $k,s \in \mathbb{N}$ we have, $M_{s+1}^{k}-M_{s}^{k}=M_{sk+k}^{(k)}-M_{sk}^{(k)}$.
	\end{theorem}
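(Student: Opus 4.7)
The plan is to reduce both sides of the claimed identity to the same expression by two direct applications of Proposition \ref{obsr}. The right-hand side contains two terms of the form $M_{mk}^{(k)}$ for $m \in \mathbb{N}$, namely $m = s+1$ in the first term (since $sk+k = (s+1)k$) and $m = s$ in the second. Proposition \ref{obsr} states exactly that $M_{mk}^{(k)} = M_m^k$, so each term can be rewritten immediately.

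Concretely, I would first note that $sk+k = (s+1)k$, so $M_{sk+k}^{(k)} = M_{(s+1)k}^{(k)}$, and then apply Proposition \ref{obsr} with $s$ replaced by $s+1$ to conclude $M_{(s+1)k}^{(k)} = M_{s+1}^{k}$. Next I apply Proposition \ref{obsr} directly to get $M_{sk}^{(k)} = M_s^k$. Subtracting these two equalities yields
\[
M_{sk+k}^{(k)} - M_{sk}^{(k)} = M_{s+1}^{k} - M_{s}^{k},
\]
which is the desired identity.

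There is no real obstacle here: the statement is essentially a corollary of Proposition \ref{obsr}, and no induction, Binet expansion, or manipulation of the recurrence \eqref{mersene} is required. The only point worth verifying carefully is the index arithmetic, namely that $sk+k$ is indeed of the form $s'k$ with $r=0$ (so that Proposition \ref{obsr} applies with $s' = s+1$), which is immediate.
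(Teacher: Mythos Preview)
Your proposal is correct and follows essentially the same approach as the paper: both arguments reduce the right-hand side by recognizing that $M_{sk+k}^{(k)}=M_{s+1}^{k}$ and $M_{sk}^{(k)}=M_{s}^{k}$, with the paper appealing directly to relation~(\ref{relation}) and you invoking Proposition~\ref{obsr} (which is itself just the $r=0$ case of that relation). If anything, your handling of the index arithmetic is slightly cleaner, since the paper formally plugs $r=k$ into (\ref{relation}) even though the definition requires $0\le r<k$, whereas you correctly rewrite $sk+k=(s+1)k$ before applying the proposition.
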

	\begin{proof}
		From eqn.(\ref{relation}), we have
		\begin{eqnarray}
			M_{sk+k}^{(k)}-M_{sk}^{(k)} &=& [M_{s}^{k-k}M_{s+1}^{k}]-[M_{s}^{k-0}M_{s+1}^{0}] \nonumber\\
			&=& M_{s+1}^{k}-M_{s}^{k}. \nonumber 
		\end{eqnarray}
	\end{proof}
	\begin{theorem}
		For $n, m \ge 0$ such that $n+m>1$,
		\begin{equation}
			M_{2(n+m-1)}^{(2)}-M_{n+m}M_{n+m-2}=2^{n+m-2}.
		\end{equation}
	\end{theorem}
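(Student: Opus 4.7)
The plan is to reduce this identity to Cassini's identity (\ref{Cassini}) via the relation $M_{2j}^{(2)} = M_{j}^{2}$ that was recorded among the bulleted identities relating $M_{n}^{(2)}$ to classical Mersenne numbers.

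First, I would apply item~1 from the list of identities (which is an immediate consequence of Proposition~\ref{obsr} with $k=2$) in the form $M_{2j}^{(2)} = M_{j}^{2}$, taking $j = n+m-1$. This rewrites the left-hand side as
\begin{equation*}
  M_{2(n+m-1)}^{(2)} - M_{n+m}M_{n+m-2} \;=\; M_{n+m-1}^{2} - M_{n+m}M_{n+m-2}.
\end{equation*}
Next, I would invoke Cassini's identity (\ref{Cassini}), $M_{n}^{2} - M_{n+1}M_{n-1} = 2^{n-1}$, with index $n+m-1$ in place of $n$; this is legal precisely because the hypothesis $n+m > 1$ guarantees $n+m-1 \ge 1$, matching the range of validity of (\ref{Cassini}). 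The substitution yields $2^{(n+m-1)-1} = 2^{n+m-2}$, which is exactly the claimed right-hand side.

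There is essentially no obstacle beyond recognizing the correct substitution: the content of the theorem is the repackaging of classical Cassini in terms of the new $k=2$ numbers. The only point worth double-checking carefully is the index condition, namely that $n+m>1$ is precisely what is needed so that $n+m-1 \geq 1$ and $n+m-2 \geq 0$ are both in the domain where $M_{\cdot}$ and Cassini's identity are defined. Given this, the proof is a two-line chain of equalities and no induction or generating-function machinery is required.
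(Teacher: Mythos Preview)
Your proposal is correct and matches the paper's own proof essentially verbatim: the paper also invokes Proposition~\ref{obsr} (equivalently, item~1) to rewrite $M_{2(n+m-1)}^{(2)}$ as $M_{n+m-1}^{2}$ and then applies Cassini's identity~(\ref{Cassini}). Your additional remark about the hypothesis $n+m>1$ ensuring the index lies in the valid range for~(\ref{Cassini}) is a welcome clarification.
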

	\begin{proof}
		By eqn.(\ref{Cassini}) and proposition(\ref{obsr}), we get
		\begin{equation}
			M_{2(n+m-1)}^{(2)}-M_{n+m}M_{n+m-2}=M_{(n+m-1)}^{2}-M_{n+m}M_{n+m-2}=2^{n+m-2}.\nonumber
		\end{equation}
	\end{proof}
	\begin{theorem}
		Let $n,k \ge 2$ then Cassini's identity for $M_{n}^{(k)}$ is,
		\\$$M_{nk+a}^{(k)}M_{nk+a-2}^{(k)}-(M_{nk+a-1}^{(k)})^{2}=
		\begin{cases}
			-2^{n-1}M_{n}^{2k-2},~~~~ a = 1 \\
			0, \hspace{2.4cm}a \ne 1\\
		\end{cases}$$.
	\end{theorem}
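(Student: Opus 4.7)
The plan is to reduce everything to the product formula (\ref{relation}), $M_n^{(k)}=M_s^{k-r}M_{s+1}^r$ for $n=sk+r$ with $0\le r<k$, and then split on the value of $a$ according to whether the indices $nk+a-1$ and $nk+a-2$ still have quotient $n$ on division by $k$ or wrap down to quotient $n-1$. The only non-trivial case will be the one where exactly one of these wraps, which is precisely $a=1$.

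For $2\le a\le k-1$, all three indices $nk+a,\,nk+a-1,\,nk+a-2$ have quotient $n$ and remainders $a,a-1,a-2$, so
\[
M_{nk+a}^{(k)}=M_n^{k-a}M_{n+1}^{a},\quad M_{nk+a-1}^{(k)}=M_n^{k-a+1}M_{n+1}^{a-1},\quad M_{nk+a-2}^{(k)}=M_n^{k-a+2}M_{n+1}^{a-2}.
\]
The product $M_{nk+a}^{(k)}M_{nk+a-2}^{(k)}$ and the square $(M_{nk+a-1}^{(k)})^2$ both equal $M_n^{2k-2a+2}M_{n+1}^{2a-2}$, so the difference is $0$. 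The case $a=0$ (equivalently $a=k$) is analogous: both lower indices wrap to quotient $n-1$ with remainders $k-1$ and $k-2$, giving $M_{nk}^{(k)}=M_n^k$, $M_{nk-1}^{(k)}=M_{n-1}M_n^{k-1}$, $M_{nk-2}^{(k)}=M_{n-1}^{2}M_n^{k-2}$, and again $M_n^{k}\cdot M_{n-1}^{2}M_n^{k-2}=(M_{n-1}M_n^{k-1})^{2}$.

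The critical case is $a=1$: here $nk+1$ and $nk$ keep quotient $n$, but $nk-1$ wraps to $(n-1)k+(k-1)$, so
\[
M_{nk+1}^{(k)}=M_n^{k-1}M_{n+1},\qquad M_{nk}^{(k)}=M_n^{k},\qquad M_{nk-1}^{(k)}=M_{n-1}M_n^{k-1}.
\]
Substituting, the mixed term factors cleanly as
\[
M_{nk+1}^{(k)}M_{nk-1}^{(k)}-\bigl(M_{nk}^{(k)}\bigr)^{2}=M_n^{2k-2}\bigl(M_{n+1}M_{n-1}-M_n^{2}\bigr),
\]
and the classical Cassini identity (\ref{Cassini}) gives $M_{n+1}M_{n-1}-M_n^{2}=-2^{n-1}$, yielding the claimed $-2^{n-1}M_n^{2k-2}$.

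I do not expect any real obstacle beyond careful remainder bookkeeping: once the division $nk+a-j=qk+r$ is tracked correctly for $j\in\{1,2\}$, the algebra is immediate and the classical Cassini identity handles the one surviving case.
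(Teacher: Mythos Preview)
Your argument is correct and follows exactly the same route as the paper: rewrite each $M_{nk+a-j}^{(k)}$ via the product formula (\ref{relation}) and reduce the $a=1$ case to the classical Cassini identity (\ref{Cassini}). If anything, you are slightly more careful than the paper, which only treats the $a\neq 1$ case under the tacit assumption $a\ge 2$; your separate handling of $a=0$ (both lower indices wrapping to quotient $n-1$) fills that small gap.
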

	\begin{proof}
		If a$\ne$ 1, then from eqn.(\ref{relation})  
		\begin{eqnarray}
			M_{nk+a}^{(k)}M_{nk+a-2}^{(k)}-(M_{nk+a-1}^{(k)})^{2}  &=&  (M_{n}^{k-a}M_{n+1}^{a})(M_{n}^{k-a+2}M_{n+1}^{a-2})-(M_{n}^{k-a+1}M_{n+1}^{a-1})^{2}\nonumber\\
			%&=& (M_{n}^{l-l})^{2}[M_{n+a}M_{n+a-2}-(M_{n+a-1})^{2}]\nonumber\\
			&=& M_{n}^{2k-2a+2}[M_{n+1}M_{2a-2}-(M_{n+1})^{2a-2}].\nonumber\\
			&=& 0 \nonumber
		\end{eqnarray}
	and if $a=1$, 
		\begin{eqnarray}
			M_{nk+1}^{(k)}M_{nk-1}^{(k)}-(M_{nk}^{(k)})^{2} &=& (M_{n}^{k-1}M_{n+1})(M_{n-1}M_{n}^{k-1})-(M_{n}^{k})^{2}\nonumber\\
			&=& M_{n}^{2k-2}[M_{n+1}M_{n-1}-(M_{n})^{2}]\nonumber\\
			&=& -2^{n-1}M_{n}^{2k-2}. \hspace{.5in} \text{(Using  eqn.(\ref{Cassini}))}\nonumber
		\end{eqnarray}	 
	\end{proof}
%%%%%%%%%%%%%%%%%%%%%%%%%%%%%%%%%%%%%%%%%%%%%%%%%%%%%%%%%%%%%%%%%%%%%%%%%%%%%%%%%%%%%
	\subsection{Mersenne Polynomial and Generalized $k$-Mersenne polynomials}
	\begin{definition}
		The \textbf{Mersenne Polynomial} $M_{n}(x)$ is defined by the recurrence relation,
		\begin{eqnarray}\label{mersenepolynomial}
			M_{n+2}(x) = 3xM_{n+1}(x) -2M_{n}(x) \hspace{.8cm}n\geq 0,~with~~M_{0}=0,~~M_{1}(x)=1.
		\end{eqnarray}
	\end{definition}
 Characteristic equation corresponding to the recurrence relation(\ref{mersenepolynomial}) is 
 \begin{equation}\label{charpoly}
 	\lambda^{2}-3x\lambda+2\lambda=0.
 \end{equation}
\begin{theorem}
	For $ n\in \mathbb{N} $, we can write Binet formula for the Mersenne polynomial as
	\begin{equation}\label{Binetpoly}
		M_{n}(x)=\dfrac{\lambda_{1}^{n}(x)-\lambda_{2}^{n}(x)}{\lambda_{1}(x)-\lambda_{2}(x)},
	\end{equation}
	where $\lambda_{1}(x)=\dfrac{3x+\sqrt{9x^{2}-8}}{2}$ and $\lambda_{2}(x)=\dfrac{3x-\sqrt{9x^{2}-8}}{2}$ are the roots of the characteristic equation(\ref{charpoly}).
\end{theorem}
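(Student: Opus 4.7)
The plan is to prove the Binet-type formula by strong induction on $n$, exploiting the fact that the closed-form expression on the right-hand side solves the same linear recurrence as $M_n(x)$ and matches the initial data.

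First I would record the Vieta relations for the characteristic polynomial. Assuming the intended equation is $\lambda^{2}-3x\lambda+2=0$ (consistent with the recurrence \eqref{mersenepolynomial}), the stated roots satisfy $\lambda_1(x)+\lambda_2(x)=3x$ and $\lambda_1(x)\lambda_2(x)=2$; this can be read off directly from the quadratic formula and will be the only algebraic fact about the $\lambda_i(x)$ used in the argument. I would also note that $\lambda_1(x)-\lambda_2(x)=\sqrt{9x^{2}-8}$ is nonzero as a polynomial expression, so the quotient on the right-hand side of \eqref{Binetpoly} is well defined as a rational function of $x$ (and extends polynomially by the usual symmetric-function argument).

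Next I would verify the base cases. For $n=0$, the numerator $\lambda_1^{0}-\lambda_2^{0}=0$ gives $M_0(x)=0$, matching the initial condition. For $n=1$, $(\lambda_1-\lambda_2)/(\lambda_1-\lambda_2)=1=M_1(x)$, again matching. Define temporarily $B_n(x):=(\lambda_1^{n}-\lambda_2^{n})/(\lambda_1-\lambda_2)$ so that $B_0=M_0$ and $B_1=M_1$.

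For the inductive step, I would compute $3xB_{n+1}(x)-2B_n(x)$ and show it equals $B_{n+2}(x)$. Using $3x=\lambda_1+\lambda_2$ and $2=\lambda_1\lambda_2$, one has
\begin{align*}
(\lambda_1+\lambda_2)(\lambda_1^{n+1}-\lambda_2^{n+1})-\lambda_1\lambda_2(\lambda_1^{n}-\lambda_2^{n}+\lambda_1^{n}-\lambda_2^{n})
&=\lambda_1^{n+2}-\lambda_2^{n+2},
\end{align*}
after cancellation of the cross terms $\lambda_1\lambda_2^{n+1}$ and $\lambda_1^{n+1}\lambda_2$. Dividing by $\lambda_1-\lambda_2$ yields $B_{n+2}(x)=3xB_{n+1}(x)-2B_n(x)$, so $B_n$ satisfies the defining recurrence \eqref{mersenepolynomial}; combined with the matching initial values, induction forces $B_n(x)=M_n(x)$ for all $n\in\mathbb{N}$.

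The proof is essentially routine; the only point requiring care is the bookkeeping in the cancellation above and the observation that, although $\sqrt{9x^{2}-8}$ may be imaginary for small real $x$, the identity is an equality of polynomials (the $\lambda_i$ appear symmetrically in $B_n$), so no analytic subtleties arise.
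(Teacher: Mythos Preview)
Your argument is correct in substance, though note a slip in the displayed line: the term multiplying $\lambda_1\lambda_2$ should be $\lambda_1^{n}-\lambda_2^{n}$, not the doubled expression $\lambda_1^{n}-\lambda_2^{n}+\lambda_1^{n}-\lambda_2^{n}$; with the doubled version the claimed cancellation would fail. The intended computation (which you describe correctly in words afterwards) is
\[
(\lambda_1+\lambda_2)(\lambda_1^{n+1}-\lambda_2^{n+1})-\lambda_1\lambda_2(\lambda_1^{n}-\lambda_2^{n})=\lambda_1^{n+2}-\lambda_2^{n+2},
\]
and with that correction the induction goes through.

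Your route differs from the paper's. The paper invokes the general theory of linear recurrences to write $M_n(x)=a\lambda_1^n(x)+b\lambda_2^n(x)$ and then solves the $2\times 2$ linear system coming from $M_0(x)=0$, $M_1(x)=1$ to obtain $a=1/(\lambda_1-\lambda_2)$ and $b=-1/(\lambda_1-\lambda_2)$. You instead define the candidate $B_n$ directly and verify by induction that it satisfies both the initial data and the recurrence, using only Vieta's relations. The paper's approach is slightly shorter and makes explicit why this particular closed form arises; your approach is more self-contained (it does not presuppose the structure of the solution space) and also makes the polynomiality of $B_n(x)$ transparent via the symmetric-function remark, a point the paper does not address.
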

\begin{proof}
	By the theory of difference equation $n^{th}$ term of Mersenne polynomial can be written as,
	\begin{equation}\label{generalpoly}
		M_{n}(x)= a\lambda_{1}^{n}(x)+b\lambda_{2}^{n}(x). 
	\end{equation}
	From eqn.(\ref{mersenepolynomial}) we have, $M_{0}(x)= a+b$ and $M_{1}(x)= a\lambda_{1}(x)+b\lambda_{2}(x)$.\vspace{.1in} \\On solving $M_{0}(x)~\&~ M_{1}(x)$ we get,
	$a=\dfrac{1}{\lambda_{1}(x)-\lambda_{2}(x)}$ and $b=\dfrac{-1}{\lambda_{1}(x)-\lambda_{2}(x)}$.\vspace{.1in}
	\\Now, using values of $a,b$ in eqn.(\ref{generalpoly}), we get 
	$$M_{n}(x)=\dfrac{\lambda_{1}^{n}(x)-\lambda_{2}^{n}(x)}{\lambda_{1}(x)-\lambda_{2}(x)}.$$
	\end{proof}
	\begin{theorem}\label{Cassinipolymial}
			For $n \geq 1$,
			\begin{equation}
				M_{n}^{2}(x)-M_{n+1}(x)M_{n-1}(x)=2^{n-1}.
			\end{equation}
	\end{theorem}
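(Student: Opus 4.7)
The plan is to derive this Cassini-type identity directly from the Binet formula (\ref{Binetpoly}), paralleling the scalar proof of (\ref{Cassini}) for Mersenne numbers. The only algebraic input needed is Vieta's relations for the characteristic equation, which yield $\lambda_{1}(x)\lambda_{2}(x)=2$ and $\lambda_{1}(x)+\lambda_{2}(x)=3x$; the product $\lambda_{1}\lambda_{2}=2$ is what will ultimately produce the right-hand side $2^{n-1}$.

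Writing $\alpha=\lambda_{1}(x)$ and $\beta=\lambda_{2}(x)$ for brevity, the first step is to substitute $M_{n}(x)=(\alpha^{n}-\beta^{n})/(\alpha-\beta)$ into the left-hand side and clear the common denominator $(\alpha-\beta)^{2}$. Expanding the resulting numerator gives
$$(\alpha^{n}-\beta^{n})^{2} - (\alpha^{n+1}-\beta^{n+1})(\alpha^{n-1}-\beta^{n-1}),$$
in which the $\alpha^{2n}$ and $\beta^{2n}$ contributions cancel. The surviving cross terms factor as $(\alpha\beta)^{n-1}(\alpha^{2}-2\alpha\beta+\beta^{2})=(\alpha\beta)^{n-1}(\alpha-\beta)^{2}$, and dividing by $(\alpha-\beta)^{2}$ leaves $(\alpha\beta)^{n-1}=2^{n-1}$.

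I expect no real obstacle; the argument is a mechanical expansion, and the only step requiring care is pulling $(\alpha\beta)^{n-1}$ cleanly out of the cross terms so that the perfect square $\alpha^{2}-2\alpha\beta+\beta^{2}$ becomes visible. A fully equivalent alternative I would keep in mind is induction on $n$: the base case $n=1$ reduces to $M_{1}^{2}(x)-M_{2}(x)M_{0}(x)=1$, and the inductive step, using the recurrence (\ref{mersenepolynomial}) to substitute $M_{n+2}(x)=3xM_{n+1}(x)-2M_{n}(x)$ and also $M_{n+1}(x)-3xM_{n}(x)=-2M_{n-1}(x)$, yields $M_{n+1}^{2}(x)-M_{n+2}(x)M_{n}(x)=2\bigl(M_{n}^{2}(x)-M_{n+1}(x)M_{n-1}(x)\bigr)$, so the Cassini quantity doubles at each step and the factor $2^{n-1}$ emerges inductively.
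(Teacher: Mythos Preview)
Your proposal is correct. Your primary route via the Binet formula (\ref{Binetpoly}) and Vieta's relation $\lambda_{1}(x)\lambda_{2}(x)=2$ is genuinely different from the paper's proof, which proceeds by induction on $n$ using the recurrence (\ref{mersenepolynomial}). Your Binet argument is shorter and more transparent: the factor $2^{n-1}$ emerges immediately as $(\alpha\beta)^{n-1}$ once the $\alpha^{2n}$ and $\beta^{2n}$ terms cancel, and there is no case analysis. The paper's induction has the advantage of avoiding the Binet formula altogether (so it works directly from the recurrence), but the inductive step as written there is somewhat convoluted. In fact, the alternative inductive step you sketch---substituting $M_{n+2}(x)=3xM_{n+1}(x)-2M_{n}(x)$ and then $M_{n+1}(x)-3xM_{n}(x)=-2M_{n-1}(x)$ to obtain the clean doubling relation $M_{n+1}^{2}(x)-M_{n+2}(x)M_{n}(x)=2\bigl(M_{n}^{2}(x)-M_{n+1}(x)M_{n-1}(x)\bigr)$---is tidier than the paper's own manipulation of the same inductive step.
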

	\begin{proof}
		It can be proved using mathematical induction on $n$.
	\\For $n=1$, 
	\begin{equation}
			M_{1}^{2}(x)-M_{2}(x)M_{0}(x)=1-0.3x=2^{0}.\nonumber
	\end{equation}  
	So, result is true for $n=1$.
	\\Assume that result is true for $n=k$, $i.e.$ 
	\begin{equation}\label{Cassinipoly2}
		M_{k}^{2}(x)-M_{k+1}(x)M_{k-1}(x)=2^{k-1}. 
	\end{equation}
	Now for $n=k+1$,
	\\using eqn.(\ref{mersenepolynomial}) and eqn.(\ref{Cassinipoly2}), we have
	\begin{eqnarray}
		M_{k+1}^{2}(x)-M_{k+2}(x)M_{k}(x) &=& M_{k}^{2}(x)-\left[\left(3M_{k+1}(x)-2M_{k}(x)\right)\left(\frac{1}{3}M_{k+1}(x)+\frac{2}{3}M_{k-1}(x)\right)\right]\nonumber\\
		&=& -2M_{k+1}(x)M_{k-1}(x)+\frac{2}{3}M_{k+1}(x)M_{k}(x)+\frac{4}{3}M_{k-1}(x)M_{k}(x)\nonumber\\
		&=& 2M_{k}^{2}(x)+2.2^{k-1}-\frac{2}{3}M_{k+1}(x)M_{k}(x)+\frac{4}{3}M_{k-1}(x)M_{k}(x) \nonumber\\
		&=& 2^{k}.\nonumber
	\end{eqnarray}

\end{proof}
%\begin{theorem}[Catlan's Formula]
%	For $n,m \geq 1$,
%	\begin{equation}\label{catlanpoly}
%		M_{n+m}(x)M_{n-m}(x)-M_{n}^{2}(x)=\dfrac{2^{n+1}-2^{n-m}(\lambda_{1}^{2m}(x)+\lambda_{2}^{2m}(x))}{9x^{2}-8}.
%	\end{equation}
%\end{theorem}
%\begin{proof}
%	Using Binet formula(\ref{Binetpoly}), we have
%	\begin{eqnarray}
%		M_{n+m}(x)M_{n-m}(x)-M_{n}^{2}(x)=
%	\end{eqnarray}
%\end{proof}
	\begin{definition}[Generalized $k$-Mersenne polynomial ($M_{n}^{(k)}(x)$)] \label{l-mersennepolnomial}
		Let $n,k\in \mathbb{N}$, then $\exists$ $s,r$ such that $n=sk+r$, $0 \le r <k$. The generalized $k$-Mersenne polynomial $M_{n}^{(k)}(x)$ is defined by 
		\begin{equation}
			M_{n}^{(k)}(x)=\left(\dfrac{\lambda_{1}^{s}(x)-\lambda_{2}^{s}(x)}{\lambda_{1}(x)-\lambda_{2}(x)}\right)^{k-r}\left(\dfrac{\lambda_{1}^{s+1}(x)-\lambda_{2}^{s+1}(x)}{\lambda_{1}(x)-\lambda_{2}(x)}\right)^{r},~~~~~n=sk+r
		\end{equation}
	where $\lambda_{1}(x)$ and $\lambda_{2}(x)$ are the roots of the characteristic equation (\ref{charpoly}).
	\end{definition}
	%Also, we can find generalized k-Mersenne polynomials by matrix method given as 
%	\begin{equation}
%		M_{n}^{l-1}X^{n} = 
%		\begin{bmatrix}
%			M_{ln+1}^{(l)}(x) & -2M_{ln}^{(l)}(x)  \\ 
%			M_{ln}^{(l)}(x) & -2M_{ln-1}^{(l)}(x) \\
%		\end{bmatrix}
%	\end{equation}
%	, where $X^{n}=\begin{bmatrix}
%		M_{n+1}(x) & -2M_{n}(X)  \\ 
%		M_{n}(x)   & -2M_{n-1}(X) \\
%	\end{bmatrix}$.
	From eqn.(\ref{Binetpoly}) and definition(\ref{l-mersennepolnomial}), we have the relation between generalized $k$-Mersenne polynomials and Mersenne polynomials as
	\begin{equation}\label{polyrelation}
		M_{n}^{(k)}(x)=M_{s}^{k-r}(x)M_{s+1}^{r}(x),~~~n=sk+r.
	\end{equation}
	If $k=1$ then $r=0$ and hence $n=s$. So, from eqn.(\ref{polyrelation},) we have $M_{n}^{(1)}(x)= M_{n}(x)$.
	\\Some values of generalized $k$-Mersenne polynomials $M_{n}^{(k)}(x)$ has been shown in following table.
	\begin{table}[h]
		\centering
			\begin{tabular}{c c c c c c c}
				\hline 
				$M_{n}^{(k)}(x)$ & k=1 & k=2  &  k=3 &  k=4 & k=5   \\
				\hline
				$M_{0}^{(k)}(x)$ & 0 & 0  &  0 &  0 & 0   \\
				
				$M_{1}^{(k)}(x)$  &  1 & 0  &  0 &  0 & 0   \\
				
				$M_{2}^{(k)}(x)$ & 3$x$ & 1  &  0 &  0 & 0    \\
				
				$M_{3}^{(k)}(x)$& 9$x^{2}$-2 & 3$x$  &  1 &  0 & 0   \\
				
				$M_{4}^{(k)}(x)$ & 27$x^{3}$-12$x$ & 9$x^{2}$  &  3$x$ &  1 & 0   \\
				
				$M_{5}^{(k)}(x)$& 81$x^{4}$-54$x^{2}$+4 & 27$x^{3}$-6$x$  &  9$x^{2}$  &  3$x$ &  1 \\
				\hline	
			\end{tabular}
		\caption{Some generalized $k$-Mersenne polynomials $M_{n}^{(k)}(x)$} 
		\label{table:2}
	\end{table}
  \\From the table(\ref{table:2}) and eqn.(\ref{polyrelation}), we have the following relations between the generalized $k$-Mersenne polynomials and Mersenne polynomials for $k=2,3$,
	\begin{enumerate}
		\item $ M_{2n}^{(2)}(x)=M_{n}^{2}(x)$.
		\item $ M_{2n+1}^{(2)}(x)=M_{n}(x)M_{n+1}(x)$.
		\item $ M_{2n+1}^{(2)}(x)=3M_{2n}^{(2)}(x)-2M_{2n-1}^{(2)}(x)$.
		\item $ M_{3n}^{(3)}(x)=M_{n}^{3}(x)$.
		\item $ M_{3n+1}^{(3)}(x)=M_{n}^{2}(x)M_{n+1}(x)$.
		\item $ M_{3n+1}^{(3)}(x)=3M_{3n}^{(3)}(x)-2M_{3n-1}^{(3)}(x)$.
		\item $ M_{3n+2}^{(3)}(x)=M_{n}(x)M_{n+1}^{2}(x)$.	
	\end{enumerate}
%	Like the properties of k-Mersenne numbers, some properties of k-Mersenne polynomials are given below. For the proofs of these properties one can proceed like we have done for k-Mersenne numbers.
	\begin{proposition}\label{obsr1}
		For $k,n \in \mathbb{N}$, we have $ M_{kn}^{(k)}(x)=M_{n}^{k}(x)$.
	\end{proposition}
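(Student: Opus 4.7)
The plan is to reduce the statement to the polynomial relation already established in equation (\ref{polyrelation}), which is the polynomial analogue of the identity used to prove Proposition \ref{obsr}. The argument mirrors the one given for the integer case $M_{sk}^{(k)} = M_s^k$ almost verbatim, the only change being that products of Mersenne numbers are replaced by products of Mersenne polynomials.

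Concretely, I would first apply the division algorithm to the index $kn$: writing $kn = sk + r$ with $0 \le r < k$, uniqueness forces $s = n$ and $r = 0$. Then I would invoke equation (\ref{polyrelation}), which expresses $M_{n'}^{(k)}(x)$ as $M_s^{k-r}(x) M_{s+1}^{r}(x)$ whenever $n' = sk + r$. Substituting $s = n$ and $r = 0$ yields
\begin{equation*}
M_{kn}^{(k)}(x) = M_n^{k-0}(x)\, M_{n+1}^{0}(x) = M_n^{k}(x),
\end{equation*}
which is exactly the claim.

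There is no real obstacle here: the proposition is a one-line consequence of the definition once the $r = 0$ case of (\ref{polyrelation}) is invoked, just as Proposition \ref{obsr} followed from (\ref{relation}). The only thing to be slightly careful about is the use of the convention $M_{s+1}^{0}(x) = 1$ regardless of the value of $M_{s+1}(x)$, so that the factor corresponding to the empty product drops out cleanly.
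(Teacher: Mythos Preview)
Your proof is correct and is exactly what the paper intends: its own proof simply reads ``It can be proved similar to proposition(\ref{obsr}),'' and you have spelled out that analogy by applying (\ref{polyrelation}) with $s=n$, $r=0$.
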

 \begin{proof}
 	It can be proved similar to proposition(\ref{obsr}).
 \end{proof}
	\begin{theorem}
		For $n,s \in \mathbb{N}$, we have 
		$$M_{sn+1}^{(s)}(x) =3xM_{sn}^{(s)}(x)-2M_{sn-1}^{(s)}(x).$$
	\end{theorem}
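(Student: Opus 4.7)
The plan is to mirror the proof of the integer analogue (the theorem that $M_{sn+1}^{(s)} = 3M_{sn}^{(s)} - 2M_{sn-1}^{(s)}$) almost verbatim, replacing the scalar recurrence from eqn.~(\ref{mersene}) with the polynomial recurrence from eqn.~(\ref{mersenepolynomial}). All the work is in correctly applying the decomposition formula (\ref{polyrelation}) to the three superscript-$(s)$ terms and then invoking Proposition~\ref{obsr1}.

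First, I would rewrite each of the three generalized quantities in terms of ordinary Mersenne polynomials via (\ref{polyrelation}). Write $sn = sn + 0$ so that $r = 0$ and the quotient is $n$; Proposition~\ref{obsr1} then gives $M_{sn}^{(s)}(x) = M_n^{s}(x)$. Write $sn - 1 = s(n-1) + (s-1)$ so that the quotient is $n-1$ and the remainder is $s-1$; applying (\ref{polyrelation}) yields $M_{sn-1}^{(s)}(x) = M_{n-1}(x)\, M_n^{s-1}(x)$. Finally, write $sn + 1 = sn + 1$ so that the quotient is $n$ and the remainder is $1$; this gives $M_{sn+1}^{(s)}(x) = M_n^{s-1}(x)\, M_{n+1}(x)$.

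With these three expressions in hand, the remainder is a one-line factorization. Compute
\begin{eqnarray}
3x M_{sn}^{(s)}(x) - 2 M_{sn-1}^{(s)}(x)
&=& 3x M_n^{s}(x) - 2 M_{n-1}(x) M_n^{s-1}(x) \nonumber\\
&=& M_n^{s-1}(x)\bigl[3x M_n(x) - 2 M_{n-1}(x)\bigr] \nonumber\\
&=& M_n^{s-1}(x)\, M_{n+1}(x) \nonumber\\
&=& M_{sn+1}^{(s)}(x), \nonumber
\end{eqnarray}
where the penultimate equality uses the polynomial recurrence (\ref{mersenepolynomial}) applied to the index $n-1$.

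There is no real obstacle: the only subtlety is bookkeeping the three division-with-remainder decompositions of $sn-1$, $sn$, and $sn+1$ correctly so that (\ref{polyrelation}) applies. Once those are pinned down, the algebra is identical to the integer case and the polynomial recurrence does the rest.
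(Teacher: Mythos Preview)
Your proof is correct and matches the paper's own argument essentially line for line: both rewrite $M_{sn}^{(s)}(x)$, $M_{sn-1}^{(s)}(x)$, and $M_{sn+1}^{(s)}(x)$ via (\ref{polyrelation}), factor out $M_n^{s-1}(x)$, and apply the recurrence (\ref{mersenepolynomial}). If anything, your write-up is slightly more explicit about the three division-with-remainder decompositions than the paper itself.
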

 \begin{proof} 
 	 From eqn.(\ref{polyrelation}) and eqn.(\ref{mersenepolynomial}), we have
 	\begin{eqnarray}
 		3xM_{sn}^{(s)}(x)-2M_{sn-1}^{(s)}(x) &=& 3xM_{n}^{s}(x)-2M_{n-1}(x)M_{n}^{s-1}(x) \nonumber\\
 		&=& M_{n}^{s-1}(x)(3xM_{n}(x)-2M_{n-1}(x))\nonumber\\
 		&=& M_{n}^{s-1}(x)M_{n+1}(x) \nonumber\\
 		&=& M_{sn+1}^{(s)}(x). \nonumber
 	\end{eqnarray}
 \end{proof}
 \begin{theorem}
 	For $k,s \in \mathbb{N}$ we have, $$M_{s+1}^{k}(x)-M_{s}^{k}(x)=M_{sk+k}^{(k)}(x)-M_{sk}^{(k)}(x).$$
 \end{theorem}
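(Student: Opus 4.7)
The plan is to mirror exactly the proof of the analogous integer identity $M_{s+1}^{k}-M_{s}^{k}=M_{sk+k}^{(k)}-M_{sk}^{(k)}$ that appears earlier in the paper, only now in the polynomial setting. The key observation is that Proposition \ref{obsr1} (the polynomial version of Proposition \ref{obsr}) already tells us what $M_{nk}^{(k)}(x)$ is: namely, $M_{nk}^{(k)}(x)=M_{n}^{k}(x)$. Both terms on the right-hand side of the desired identity are precisely of this form, since $sk=s\cdot k+0$ and $sk+k=(s+1)k+0$ both fall under the unique decomposition $n=sk+r$ with $r=0$.

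So the proof is a one-line substitution. First I would invoke the defining relation \eqref{polyrelation}: with $n=sk$ we have $r=0$, giving
\begin{equation*}
M_{sk}^{(k)}(x)=M_{s}^{k-0}(x)\,M_{s+1}^{0}(x)=M_{s}^{k}(x).
\end{equation*}
Next, rewriting $sk+k=(s+1)k+0$ (so that the residue $r$ lies in the admissible range $0\le r<k$) and applying \eqref{polyrelation} again,
\begin{equation*}
M_{sk+k}^{(k)}(x)=M_{s+1}^{k-0}(x)\,M_{s+2}^{0}(x)=M_{s+1}^{k}(x).
\end{equation*}
Subtracting these two equalities yields $M_{sk+k}^{(k)}(x)-M_{sk}^{(k)}(x)=M_{s+1}^{k}(x)-M_{s}^{k}(x)$, which is the claim. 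There is no real obstacle here; the only thing to be careful about is writing $sk+k$ in the form $(s+1)k+0$ rather than $sk+k$, since the defining decomposition requires $0\le r<k$ and $r=k$ would be illegal.
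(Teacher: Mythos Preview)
Your proof is correct and follows the same approach as the paper: both simply evaluate the two right-hand terms via the defining relation \eqref{polyrelation} and subtract. In fact you are slightly more careful than the paper, which writes $M_{sk+k}^{(k)}(x)=M_{s}^{k-k}(x)M_{s+1}^{k}(x)$ (formally taking $r=k$, outside the admissible range) rather than rewriting $sk+k=(s+1)k+0$ as you do; the two computations of course give the same value.
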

 \begin{proof}
 	From eqn.(\ref{polyrelation}), we have
 	\begin{eqnarray}
 		M_{sk+k}^{(k)}(x)-M_{sk}^{(k)}(x) &=& [M_{s}^{k-k}(x)M_{s+1}^{k}(x)]-[M_{s}^{k-0}(x)M_{s+1}^{0}(x)] \nonumber\\
 		&=& M_{s+1}^{k}(x)-M_{s}^{k}(x). \nonumber 
 	\end{eqnarray}
 \end{proof}
% \begin{theorem}
% 	For $n$ \& $m \ge 0$ such that $n+m>1$, we have
% 	\begin{equation}
% 		M_{2(n+m-1)}^{(2)}(x)-M_{n+m}(x)M_{n+m-2}(x)=2^{n+m-2}
% 	\end{equation}
% \end{theorem}
% \begin{proof}
% 	By equation(\ref{Cassini}) and Proposition(\ref{obsr1}), we get
% 	\begin{equation}
% 		M_{2(n+m-1)}^{(2)}-M_{n+m}M_{n+m-2}=M_{(n+m-1)}^{2}-M_{n+m}M_{n+m-2}=2^{n+m-2}.
% 	\end{equation}
% \end{proof}
 \begin{theorem}
 	Let $n,k \ge 2$, we can write Cassini's identity for $M_{n}^{(k)}(x)$ as\\
 	$$M_{nk+a}^{(k)}(x)M_{nk+a-2}^{(k)}(x)-(M_{nk+a-1}^{(k)})^{2}(x)=
 	\begin{cases}
 		-2^{n-1}M_{n}^{2k-2}(x),~~~~ a = 1 \\
 		0,\hspace{2.9cm} a \ne 1
 	\end{cases}$$
 \end{theorem}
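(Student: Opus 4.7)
The plan is to mirror the proof of the integer Cassini identity for $M_n^{(k)}$ given just above, lifting it to polynomials by replacing the product formula (\ref{relation}) with its polynomial analogue (\ref{polyrelation}), and the scalar Cassini identity (\ref{Cassini}) with Theorem \ref{Cassinipolymial}. The proof naturally splits into two cases according to whether one of the three indices $nk+a,\ nk+a-1,\ nk+a-2$ crosses a ``block boundary'' of the form $mk$ under the canonical decomposition $m = sk+r$ with $0 \le r < k$.

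First, I would treat the generic case $a \ne 1$ (so $2 \le a \le k-1$, which is what makes the statement meaningful). Here all three indices share the same quotient $s = n$ upon division by $k$, and (\ref{polyrelation}) gives
\begin{equation*}
M_{nk+a}^{(k)}(x) = M_n^{k-a}(x)\,M_{n+1}^{a}(x), \quad M_{nk+a-1}^{(k)}(x) = M_n^{k-a+1}(x)\,M_{n+1}^{a-1}(x),
\end{equation*}
together with the analogous formula for $M_{nk+a-2}^{(k)}(x)$. Substituting and collecting powers produces two identical copies of $M_n^{2k-2a+2}(x)\,M_{n+1}^{2a-2}(x)$, which cancel.

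Next, I would handle the boundary case $a = 1$. Here the index $nk-1$ crosses a block boundary and decomposes as $(n-1)k + (k-1)$, so (\ref{polyrelation}) yields $M_{nk-1}^{(k)}(x) = M_{n-1}(x)\,M_n^{k-1}(x)$, while $M_{nk+1}^{(k)}(x) = M_n^{k-1}(x)\,M_{n+1}(x)$ and $M_{nk}^{(k)}(x) = M_n^{k}(x)$ by Proposition \ref{obsr1}. Factoring the common $M_n^{2k-2}(x)$ out of the left-hand side leaves precisely $M_n^{2k-2}(x)\bigl(M_{n+1}(x)\,M_{n-1}(x) - M_n^{2}(x)\bigr)$, and Theorem \ref{Cassinipolymial} identifies the bracket with $-2^{n-1}$, yielding the stated $-2^{n-1}\,M_n^{2k-2}(x)$.

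There is no genuine obstacle; the only point needing attention is correctly tracking which index falls into which block of the decomposition $m = sk + r$ — and this boundary-crossing phenomenon at $a = 1$ is exactly what produces the two-case structure of the conclusion. Everything else is the same algebra as the integer version, with the factor $3$ absorbed into $3x$ via Theorem \ref{Cassinipolymial}.
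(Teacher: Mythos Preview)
Your proposal is correct and follows essentially the same route as the paper: split into the cases $a\ne 1$ and $a=1$, expand each term via the product formula (\ref{polyrelation}), observe the direct cancellation when $a\ne 1$, and in the boundary case $a=1$ factor out $M_n^{2k-2}(x)$ and invoke Theorem~\ref{Cassinipolymial}. If anything, your write-up is slightly cleaner in explicitly identifying the block-boundary crossing at $nk-1=(n-1)k+(k-1)$ and the range $2\le a\le k-1$ needed for the generic case.
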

 \begin{proof}
 	Let $a\ne 1$, so from eqn.(\ref{relation}) we have
 	\begin{eqnarray}
 		M_{nk+a}^{(k)}(x)M_{nk+a-2}^{(k)}(x)-(M_{nk+a-1}^{(k)})^{2}(x) 
 		&=&(M_{n}^{k-a}(x)M_{n+1}^{a}(x))(M_{n}^{k-a+2}(x)M_{n+1}^{a-2}(x))\nonumber\\
 		& & -(M_{n}^{k-a+1}(x)M_{n+1}^{a-1})^{2}(x)\nonumber\\
 		%&=& (M_{n}^{l-l})^{2}[M_{n+a}M_{n+a-2}-(M_{n+a-1})^{2}]\nonumber\\
 		&=& M_{n}^{2k-2a+2}(x)[M_{n+1}(x)M_{2a-2}(x)-(M_{n+1})^{2a-2}(x)].\nonumber\\
 		&=& 0 \nonumber
 	\end{eqnarray}
 	and if $a=1$, 
 	\begin{eqnarray}
 		M_{nk+1}^{(k)}(x)M_{nk-1}^{(k)}(x)-(M_{nk}^{(k)})^{2}(x) &=& (M_{n}^{k-1}(x)M_{n+1}(x))(M_{n-1}M_{n}^{k-1}(x))-(M_{n}^{k})^{2}(x)\nonumber\\
 		&=& M_{n}^{2k-2}(x)[M_{n+1}(x)M_{n-1}(x)-(M_{n})^{2}(x)]\nonumber\\
 		&=& -2^{n-1}M_{n}^{2k-2}(x). \hspace{.5in} \text{(Using theorem \ref{Cassinipolymial})} \nonumber
 	\end{eqnarray}	 
 \end{proof} 
%%%%%%%%%%%%%%%%%%%%%%%%%%%%%%%%%%%%%%%%%%%%%%%%%%%%%%%%%%%%%%%%%%%%%%%%%%%%%%%%%%%%%
	\section{Generalized Gaussian Mersenne numbers and their polynomials }
	\subsection{Gaussian Mersenne Numbers}
	\begin{definition}\label{gaussmersene}
		The \textbf{Gaussian Mersenne sequence$\{GM_{k}\}$} is defined by the recurrence relation,
		\begin{eqnarray}
			GM_{k+2} = 3GM_{k+1} -2GM_{k}, \hspace{.8cm}k\geq 0
		\end{eqnarray}
		with $GM_{0}=-i/2,~~GM_{1}=1$.
	\end{definition}
	The first few Gaussian Mersenne numbers are $-i/2,1,3+i,7+3i,15+7i,...$
	\\Further, the relation between Gaussian Mersenne numbers and classical Mersenne numbers is $$GM_{k+2} = M_{k+2}+iM_{k+1},$$ where $GM_{k}$ is the $k^{th}$-Gaussian Mersenne number.
	\begin{theorem}
		For every $n \in \mathbb{N}$, Binet formula for the Gaussian Mersenne numbers is 
		\begin{equation}\label{GaussBinet}
			GM_{n}=(\lambda_{1}^{n}-\lambda_{2}^{n})+i(\lambda_{1}^{n-1}-\lambda_{2}^{n-1}),
		\end{equation}
		where $\lambda_{1}$ and $\lambda_{2}$ are the roots of the characteristic equation(\ref{char}).
	\end{theorem}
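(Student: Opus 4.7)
The plan is to prove the Binet formula by induction on $n$, using the linear recurrence from Definition \ref{gaussmersene} together with the fact that $\lambda_1$ and $\lambda_2$ are roots of the characteristic equation (\ref{char}). The driving identity throughout will be $\lambda_j^{m+2}=3\lambda_j^{m+1}-2\lambda_j^{m}$ for $j=1,2$, which converts the Gaussian-Mersenne recurrence into an algebraic shift in the exponents.

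For the base cases I would check $n=0$ and $n=1$ directly. At $n=1$ the right-hand side collapses to $(\lambda_1-\lambda_2)+i(\lambda_1^{0}-\lambda_2^{0})=\lambda_1-\lambda_2$, which equals $1$ since the roots of (\ref{char}) are $2$ and $1$. At $n=0$ the formula involves $\lambda_j^{-1}$; a direct computation gives $0+i(\lambda_1^{-1}-\lambda_2^{-1})=i\,\frac{\lambda_2-\lambda_1}{\lambda_1\lambda_2}=-i/2$, matching $GM_0$.

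For the inductive step, assuming the formula holds at $n$ and at $n+1$, I would substitute the two hypothesized expressions into $GM_{n+2}=3GM_{n+1}-2GM_{n}$ and group real and imaginary parts. Each part is a combination of two terms of the shape $3\lambda_j^{m+1}-2\lambda_j^{m}$, which by the characteristic identity collapses to $\lambda_j^{m+2}$. The resulting expression is precisely $(\lambda_1^{n+2}-\lambda_2^{n+2})+i(\lambda_1^{n+1}-\lambda_2^{n+1})$, closing the induction.

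A shorter conceptual route, which I would mention as a sanity check, is to start from the relation $GM_{n}=M_{n}+iM_{n-1}$, obtained by re-indexing the identity $GM_{k+2}=M_{k+2}+iM_{k+1}$ stated just before the theorem, and then apply the classical Binet formula (\ref{Binet}) termwise. The only friction is interpreting $M_{-1}$ when $n=0$, which forces a backward extension of the Mersenne recurrence; this is the only genuine obstacle I anticipate, and the induction route above sidesteps it entirely, which is why I would present that as the main proof.
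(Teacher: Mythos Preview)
Your induction argument is correct: the base cases check out (including the $n=0$ case, where $\lambda_1^{-1}-\lambda_2^{-1}=1/2-1=-1/2$ gives $-i/2$), and the inductive step is exactly the linearity of the recurrence combined with the characteristic identity $\lambda_j^{m+2}=3\lambda_j^{m+1}-2\lambda_j^{m}$.

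However, your route differs from the paper's. The paper does not induct: it invokes the general solution $GM_n=a\lambda_1^{n}+b\lambda_2^{n}$ of the linear recurrence, imposes the initial conditions $GM_0=-i/2$ and $GM_1=1$ to solve the $2\times 2$ system for $a$ and $b$ (obtaining $a=1+i/2$, $b=-1-i$), and then recognises that $a\lambda_1^{n}+b\lambda_2^{n}$ rewrites as $(\lambda_1^{n}-\lambda_2^{n})+i(\lambda_1^{n-1}-\lambda_2^{n-1})$. Your induction is slightly more elementary in that it never appeals to the structure theorem for linear recurrences, and it avoids the somewhat opaque last step of rewriting $a\lambda_1^{n}+b\lambda_2^{n}$ in the desired form; on the other hand, the paper's approach explains \emph{why} the formula has this shape rather than merely verifying it. Your ``shorter conceptual route'' via $GM_n=M_n+iM_{n-1}$ and the classical Binet formula~(\ref{Binet}) is in fact closest in spirit to what the paper is doing, and would also be a perfectly acceptable one-line proof once $M_{-1}$ is interpreted via~(\ref{Binet}).
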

	\begin{proof}
		The $n^{th}$ term of Gaussian Mersenne numbers for the difference equation(\ref{gaussmersene}) is,
		\begin{equation}\label{general}
			GM_{n}= a\lambda_{1}^{n}+b\lambda_{2}^{n}. 
		\end{equation}
	To eliminate constants $a$ $\&$ $b$, we use initial conditions given in eqn.(\ref{gaussmersene}). \\Since, we have $GM_{0}= a+b$ and $GM_{1}= a\lambda_{1}+b\lambda_{2}$. \\It yields, $a=1+i/2$ and $b=-1-i$.
		\\So, from eqn.(\ref{general}) we get 
		$$GM_{n}=(\lambda_{1}^{n}-\lambda_{2}^{n})+i(\lambda_{1}^{n-1}-\lambda_{2}^{n-1}).$$
		Furthermore, by using values of  $\lambda_{1}$ and $\lambda_{2}$ in eqn.(\ref{GaussBinet}), we get another form of Binet formula which is 		\begin{equation}\label{GaussBinet2}
			GM_{n}=(2^{n}-1)+i(2^{n-1}-1).
		\end{equation}
	\end{proof}
%	\begin{theorem}[Cassini's Identity]
%		For $n \geq 1$,
%		\begin{equation}\label{gaussCassini}
%			GM_{n-1}GM_{n+1}-GM_{n}^{2}=(2^{n-2}-2^{n-1})-i(3.2^{n-2}).
%		\end{equation}
%	\end{theorem}
%	\begin{proof}
%		We prove the theorem by the induction method on $n$.
%		\\For $n=1$,
%		\begin{equation}
%			GM_{2}GM_{0}-GM_{1}^{2}=(3+i)(-i/2)-1=(2^{-1}-2^{0})-i32^{-1}.\nonumber
%		\end{equation}  
%		So, result is true for $n=1$.
%		\\Assume that result is true for $n=k$ $i.e.$ 
%		\begin{equation}\label{gaussCassini2}
%			GM_{k+1}GM_{k-1}-GM_{k}^{2}=(2^{k-2}-2^{k-1})-i(3.2^{k-2}). 
%		\end{equation}
%		So, for $n=k+1$,
%		\begin{eqnarray}
%			GM_{k+2}GM_{k}-GM_{k+1}^{2}&=&\left[\left(3GM_{k+1}-2GM_{k}\right)\left(\frac{1}{3}GM_{k+1}+\frac{2}{3}GM_{k-1}\right)\right]-GM_{k+1}^{2}. \nonumber
%		\end{eqnarray}
%		Using equation(\ref{gaussCassini2}) and after some calculations we get, 
%		\begin{equation}
%			GM_{k+2}GM_{k}-GM_{k+1}^{2}=(2^{k-1}-2^{k})-i(3.2^{k-1}).
%		\end{equation}
%	\begin{eqnarray}
%		M_{k+1}^{2}(x)-M_{k+2}(x)M_{k}(x) &=& M_{k}^{2}(x)-\left[\left(3M_{k+1}(x)-2M_{k}(x)\right)\left(\frac{1}{3}M_{k+1}(x)+\frac{2}{3}M_{k-1}(x)\right)\right] \hspace{.1in} \text{Using Eqn(\ref{mersenepolynomial})}\nonumber\\
%		&=& -2M_{k+1}(x)M_{k-1}(x)+\frac{2}{3}M_{k+1}(x)M_{k}(x)+\frac{4}{3}M_{k-1}(x)M_{k}(x)\nonumber\\
%		&=& 2M_{k}^{2}(x)+2.2^{k-1}-\frac{2}{3}M_{k+1}(x)M_{k}(x)+\frac{4}{3}M_{k-1}(x)M_{k}(x)\hspace{.3in} \text{Using Eqn(\ref{Cassinipoly2})}\nonumber\\
%		&=& 2^{k}. \hspace{1.5in}\text{Using Eqn(\ref{mersenepolynomial})}\nonumber
%	\end{eqnarray}
%	\end{proof}
	\begin{theorem}[Catlan's Identity]
		For $n,m \geq 1$, we have
		\begin{equation}\label{gaussCatlan}
			GM_{n+m}GM_{n-m}-GM_{n}^{2}=\left[(2^{n}-2^{n+m-1})+(2^{n-m-1}-2^{n-m})\right]+i3(2^{n}-2^{n+m-1}-2^{n-m-1}).
		\end{equation}
	\end{theorem}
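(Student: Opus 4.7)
The plan is to exploit the closed-form expression \eqref{GaussBinet2}, namely $GM_n=(2^n-1)+i(2^{n-1}-1)$, and simply expand both sides of the claimed identity in terms of powers of $2$. Working from this explicit representation is cleaner than using the $\lambda_1,\lambda_2$ form, because it immediately replaces everything with integer powers of $2$ and lets me separate real and imaginary parts with no ambiguity.

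First I would write $GM_{n+m}$, $GM_{n-m}$, and $GM_n$ in the form $a+ib$ using \eqref{GaussBinet2}, multiplying out $GM_{n+m}GM_{n-m}=(a_1a_2-b_1b_2)+i(a_1b_2+a_2b_1)$ where $a_j,b_j$ are the corresponding $(2^\bullet-1)$ terms. Each product like $(2^{n+m}-1)(2^{n-m}-1)$ expands to $2^{2n}-2^{n+m}-2^{n-m}+1$, so after collecting the four contributions the real part of $GM_{n+m}GM_{n-m}$ becomes $\tfrac{3}{4}\,2^{2n}-2^{n+m-1}-2^{n-m-1}$ and its imaginary part becomes $2^{2n}-2^{n+m}-2^{n+m-1}-2^{n-m}-2^{n-m-1}+2$. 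The same procedure applied to $GM_n^2=[(2^n-1)+i(2^{n-1}-1)]^2$ gives real part $\tfrac{3}{4}\,2^{2n}-2^n$ and imaginary part $2^{2n}-2^{n+1}-2^n+2$. Subtracting, the $\tfrac{3}{4}\,2^{2n}$ terms and the constants cancel, leaving exactly $2^n-2^{n+m-1}-2^{n-m-1}$ in the real part and $3\,(2^n-2^{n+m-1}-2^{n-m-1})$ in the imaginary part, which is what the theorem asserts once one rewrites $-2^{n-m-1}$ as $2^{n-m-1}-2^{n-m}$.

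The only real obstacle is careful bookkeeping of the many $2^{\bullet}$-terms, since a single sign or exponent error in one of the four cross-products in $GM_{n+m}GM_{n-m}$ would destroy the cancellation with $GM_n^2$. To keep this under control I would organize the expansion into a small table of the four quantities $a_1a_2,\,b_1b_2,\,a_1b_2,\,a_2b_1$ and only afterwards combine them. As a sanity check I would verify the identity for $(n,m)=(2,1)$ against the listed values $GM_1=1,\ GM_2=3+i,\ GM_3=7+3i$ before committing to the general algebra; this also fixes the interpretation of the right-hand side and confirms the parenthesization chosen in the statement.
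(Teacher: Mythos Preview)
Your proposal is correct and follows essentially the same approach as the paper: both substitute the closed form $GM_n=(2^n-1)+i(2^{n-1}-1)$ from \eqref{GaussBinet2} and expand the left-hand side, separating real and imaginary parts. Your write-up is actually more detailed and careful than the paper's, which merely states the substitution and the result without the intermediate bookkeeping.
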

	 \begin{proof}
	  Using the Binet formula(\ref{GaussBinet2}), we have
	  \begin{eqnarray}
	  	GM_{n+m}GM_{n-m}-GM_{n}^{2}&=&\left[(2^{n+m}-1)+i(2^{n+m-1}-1)(2^{n+m}-1)+i(2^{n+m-1}-1)\right]\nonumber\\
	  	& & -\left[(2^{n}-1)+i(2^{n-1}-1)\right]^{2}\nonumber\\
	  &=&\left[(2^{n}-2^{n+m-1})+(2^{n-m-1}-2^{n-m})\right]+i3(2^{n}-2^{n+m-1}-2^{n-m-1}).\nonumber
	  \end{eqnarray}
	 \end{proof}
	 \textbf{Note:} If $m=1$ in the above Catlan's identity(\ref{gaussCatlan}), we get Cassini's identity for the Gaussian Mersenne numbers and hence the following result.
	 \begin{theorem}[Cassini's Identity]
	 	For $n \geq 1$,
	 	\begin{equation}\label{gaussCassini}
	 		GM_{n+1}GM_{n-1}-GM_{n}^{2}=(2^{n-2}-2^{n-1})-i(3.2^{n-2}).
	 	\end{equation}
	 \end{theorem}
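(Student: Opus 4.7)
The plan is to specialize the Catalan identity (\ref{gaussCatlan}) to the case $m=1$, since, as noted immediately before the statement, Cassini's identity is precisely the $m=1$ instance of Catalan's identity. All the work has already been done in establishing the Catalan identity, so the proof amounts to a bookkeeping substitution. I would begin by writing out the right-hand side of (\ref{gaussCatlan}) with $m$ replaced by $1$, obtaining the real part $(2^n - 2^{n+0}) + (2^{n-2}-2^{n-1})$ and the imaginary part $3(2^n - 2^n - 2^{n-2})$. The first bracket in the real part collapses to $0$, leaving $2^{n-2}-2^{n-1}$, and the imaginary part simplifies to $-3\cdot 2^{n-2}$. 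Combining these yields the stated formula, so the entire proof is a one-line invocation of the preceding theorem.

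As a sanity check, and in case a referee wants a self-contained argument, I would also sketch the alternative direct route: apply the closed-form Binet expression (\ref{GaussBinet2}) to $GM_{n+1}$, $GM_{n-1}$ and $GM_n$, expand the products
\begin{equation*}
GM_{n+1}GM_{n-1} = \bigl[(2^{n+1}-1)+i(2^n-1)\bigr]\bigl[(2^{n-1}-1)+i(2^{n-2}-1)\bigr],
\end{equation*}
\begin{equation*}
GM_n^2 = \bigl[(2^n-1)+i(2^{n-1}-1)\bigr]^2,
\end{equation*}
and subtract. The cross terms $2^{2n}$ and $-2 \cdot 2^{n+1} \cdot (\text{low order})$ in the real parts cancel against their analogues in $GM_n^2$, and what survives is exactly $(2^{n-2}-2^{n-1}) - i(3\cdot 2^{n-2})$.

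The only place where one might trip up is in the arithmetic of the exponents when substituting $m=1$: it is easy to miswrite $2^{n+m-1}$ as $2^{n}$ (correct) but then to drop the leading $2^n$ from the real part before noticing it cancels. I would therefore present the specialization carefully in a single display equation, separating real and imaginary parts, so the cancellation $2^n-2^n = 0$ is visible and the reader can verify that $3(2^n - 2^n - 2^{n-2}) = -3\cdot 2^{n-2}$ at a glance. No harder obstacle is present, since the heavy lifting was already absorbed into the Catalan identity.
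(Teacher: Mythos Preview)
Your proposal is correct and follows exactly the approach the paper intends: the paper offers no separate proof of Cassini's identity, merely remarking before the statement that it is the $m=1$ case of the Catalan identity (\ref{gaussCatlan}). Your substitution and simplification are accurate, and the optional direct verification via (\ref{GaussBinet2}) is a harmless bonus that the paper does not include.
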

	  \begin{theorem}[D'Ocagne's Identity]
	 	For $n,m \geq 1$,
	 	\begin{equation}\label{gaussOcagne}
	 		GM_{m+1}GM_{n}-GM_{m}GM_{n+1}=(2^{n-1}-2^{m-1})+i3(2^{n-1}-2^{m-1}).
	 	\end{equation}
	 \end{theorem}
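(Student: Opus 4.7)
The cleanest path is to substitute the closed form (\ref{GaussBinet2}), namely $GM_n=(2^n-1)+i(2^{n-1}-1)$, directly into the left-hand side and separate real and imaginary parts. Both $GM_{m+1}GM_n$ and $GM_mGM_{n+1}$ are products of two Gaussian integers, so each contributes a real part of the form $(\text{real})(\text{real})-(\text{imag})(\text{imag})$ and an imaginary part of the form $(\text{real})(\text{imag})+(\text{imag})(\text{real})$. The plan is to compute these four bracketed expressions, subtract, and then collect powers of $2$.

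In more detail, I would first write
\begin{equation*}
GM_{m+1}GM_n-GM_mGM_{n+1}=A+iB,
\end{equation*}
where
\begin{align*}
A&=(2^{m+1}-1)(2^n-1)-(2^m-1)(2^{n-1}-1)-(2^m-1)(2^{n+1}-1)+(2^{m-1}-1)(2^n-1),\\
B&=(2^{m+1}-1)(2^{n-1}-1)+(2^m-1)(2^n-1)-(2^m-1)(2^n-1)-(2^{m-1}-1)(2^{n+1}-1).
\end{align*}
In $B$ the two middle terms cancel immediately, leaving only $(2^{m+1}-1)(2^{n-1}-1)-(2^{m-1}-1)(2^{n+1}-1)$; expanding shows the $2^{m+n}$ and constant terms cancel, and collecting the remaining monomials $-2^{m+1}+2^{m-1}+2^{n+1}-2^{n-1}$ gives $3(2^{n-1}-2^{m-1})$. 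For $A$, expanding each of the four products produces a $2^{m+n+1}$ contribution twice with opposite signs and a $2^{m+n-1}$ contribution twice with opposite signs, so those mixed powers drop out; the remaining pure powers of $2^m$ telescope to $-2^{m-1}$ and those of $2^n$ to $+2^{n-1}$, yielding $A=2^{n-1}-2^{m-1}$.

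Combining, $A+iB=(2^{n-1}-2^{m-1})+i\,3(2^{n-1}-2^{m-1})$, which is exactly the claim.

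The main obstacle is purely bookkeeping: there are four binomial products each of which produces four terms, so sixteen monomials to track with signs, and one needs to verify that all the $2^{m+n\pm 1}$ cross terms really do cancel. An alternative, slightly more structural route would be to use the additive form $GM_n=(\lambda_1^n-\lambda_2^n)+i(\lambda_1^{n-1}-\lambda_2^{n-1})$ from (\ref{GaussBinet}) and exploit $\lambda_1\lambda_2=2$ to collapse the expression, mirroring how D'Ocagne is proved for Fibonacci-type sequences; but since (\ref{GaussBinet2}) is already available, the direct expansion above is shorter.
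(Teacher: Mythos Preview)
Your proof is correct and follows essentially the same approach as the paper: both substitute the closed form $GM_n=(2^n-1)+i(2^{n-1}-1)$ from (\ref{GaussBinet2}) into the left-hand side and expand. Your version is in fact more detailed, carrying out the real/imaginary separation and the cancellation of the mixed $2^{m+n\pm1}$ terms explicitly, whereas the paper simply writes the substitution and jumps to the final answer.
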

	\begin{proof}
		From eqn.(\ref{GaussBinet2}), we have
		\begin{eqnarray}
			GM_{m+1}GM_{n}-GM_{m}GM_{n+1} &=&\left[ (2^{m+1}-1)+i(2^{m}-1)(2^{n}-1)+i(2^{n-1}-1)\right]\nonumber\\
			& & -\left[\ (2^{m}-1)+i(2^{m-1}-1)(2^{n+1}-1)+i(2^{n}-1)\right]\nonumber\\
			&=&(2^{n-1}-2^{m-1})+i3(2^{n-1}-2^{m-1}).\nonumber
		\end{eqnarray}
	\end{proof}
	\begin{theorem}[Generating function]
	Generating function for Gaussian Mersenne numbers is
	\begin{equation*}\label{gaussgenerating}				GM(z)=\dfrac{z+i\left(\dfrac{3}{2}z-\dfrac{1}{2}\right)}{(1-3z+2z^{2})}.
	\end{equation*}
	\end{theorem}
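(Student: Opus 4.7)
The plan is to adopt the standard generating-function derivation for a linear recurrence with constant coefficients. Define formally
\[
GM(z) = \sum_{n=0}^{\infty} GM_n z^n,
\]
and consider the product $(1 - 3z + 2z^2)\,GM(z)$. The aim is to show that this product is a polynomial of degree at most $1$ (in fact exactly matching the claimed numerator), which then gives the desired closed form after dividing by $1-3z+2z^2$.

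Carrying this out, I would expand
\[
(1 - 3z + 2z^2)\,GM(z) = \sum_{n=0}^{\infty} GM_n z^n - 3\sum_{n=0}^{\infty} GM_n z^{n+1} + 2\sum_{n=0}^{\infty} GM_n z^{n+2},
\]
re-index each sum so that the coefficient of $z^n$ for $n\geq 2$ appears as $GM_n - 3GM_{n-1} + 2GM_{n-2}$, and invoke the recurrence of Definition~\ref{gaussmersene} to conclude that every such coefficient is zero. What survives are only the constant and linear terms, namely $GM_0$ and $GM_1 - 3GM_0$.

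In the final step I would substitute the initial values $GM_0 = -i/2$ and $GM_1 = 1$ to obtain
\[
(1-3z+2z^2)\,GM(z) \;=\; -\tfrac{i}{2} \;+\; \bigl(1 + \tfrac{3i}{2}\bigr)z \;=\; z + i\!\left(\tfrac{3}{2}z - \tfrac{1}{2}\right),
\]
and then divide through by $1-3z+2z^2$ to recover the stated formula. There is no genuine obstacle here: the only thing one must be careful about is tracking the imaginary contributions from the non-integer initial value $GM_0=-i/2$, since a sign error in $GM_1 - 3GM_0$ would spoil the $-1/2$ appearing inside the parenthesis of the numerator. Convergence is not an issue because the identity is purely formal, though one could remark in passing that it holds analytically for $|z|<1/2$, inside the smaller root of $1-3z+2z^2=0$.
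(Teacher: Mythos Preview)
Your proposal is correct and follows essentially the same approach as the paper: multiply the generating function by $1-3z+2z^{2}$, use the recurrence to kill all coefficients in degree $\ge 2$, and substitute the initial values $GM_{0}=-i/2$, $GM_{1}=1$ to identify the surviving numerator. Your write-up is in fact more detailed than the paper's, which simply states the key identity $GM(z)-3zGM(z)+2z^{2}GM(z)=GM_{0}+z(GM_{1}-3GM_{0})$ without spelling out the re-indexing.
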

	\begin{proof}
		The Generating function for the sequence $\{GM_{n}\}_{n \in \mathbb{N}}$ is given by
		$$GM(z)=\sum_{j=0}^{\infty}GM_{j}z^{j}.$$
		i.e.
		$$GM(z)=GM_{0}+GM_{1}z^{1}+GM_{2}z^{2}+...+GM_{n}z^{n}+...$$
		Now,
		\begin{eqnarray}\label{gen}
			GM(z)-3zGM(z)+2z^{2}GM(z)=GM_{0}+z(GM_{1}-3GM_{0})
		\end{eqnarray}
		So, using initial values(\ref{gaussmersene}) in eqn.(\ref{gen}), we have
		\begin{equation}
			GM(z)=\dfrac{z+i\left(\dfrac{3}{2}z-\dfrac{1}{2}\right)}{(1-3z+2z^{2})}.
		\end{equation}
	\end{proof}

%%%%%%%%%%%%%%%%%%%%%%%%%%%%%%%%%%%%%%%%%%%%%%%%%%%%%%%%%%%%%%%%%%%%%%%%%%%%%%
	\subsection{Generalized $k$-Gaussian Mersenne numbers}
	\begin{definition}\label{Gaussl-mersenne}
		Let $n,k\in \mathbb{N}$ then $\exists $ $s,r$ such that $n=sk+r$, $0 \le r <k$. The generalized $k$-Gaussian Mersenne numbers $(GM_{n}^{(k)})$ are defined by 
		\begin{equation}
			GM_{n}^{(k)}=\left( \lambda_{1}^{s}-\lambda_{2}^{s}+i(\lambda_{1}^{s-1}-\lambda_{2}^{s-1})\right)^{k-r}\left( \lambda_{1}^{s+1}-\lambda_{2}^{s+1}+i(\lambda_{1}^{s}-\lambda_{2}^{s})\right)^{r},~~~~~n=sk+r
		\end{equation}
		where $\lambda_{1}$ and $\lambda_{2}$ are the roots of the characteristic equation (\ref{char}).
	\end{definition}
	The relation between generalized $k$-Gaussian Mersenne numbers and Gaussian Mersenne numbers are (See the eqn.(\ref{GaussBinet}) and definition(\ref{Gaussl-mersenne})) 
	\begin{equation}\label{Gaussrelation}
		GM_{n}^{(k)}=GM_{s}^{k-r}GM_{s+1}^{r},~~~n=sk+r.
	\end{equation}
	If $k=1$ then $r=0$ and hence $m=n$. So, from eqn.(\ref{Gaussrelation}) we have $GM_{n}^{(1)}= GM_{n}$.
	\\In particular for $k=2,3,4$, the relation between generalized $k$-Gaussian Mersenne numbers and Gaussian Mersenne numbers are as following: 
	\begin{enumerate}
		\item $ GM_{2n}^{(2)}=GM_{n}^{2}$.
		\item $ GM_{2n+1}^{(2)}=GM_{n}GM_{n+1}$.
		\item $ GM_{2n+1}^{(2)}=3GM_{2n}^{(2)}-2GM_{2n-1}^{(2)}$.
		\item $ GM_{3n}^{(3)}=GM_{n}^{3}$.
		\item $ GM_{3n+1}^{(3)}=GM_{n}^{2}GM_{n+1}$.
		\item $ GM_{3n+1}^{(3)}=3GM_{3n}^{(3)}-2GM_{3n-1}^{(3)}$.
		\item $ GM_{3n+2}^{(3)}=GM_{n}GM_{n+1}^{2}$.	
	\end{enumerate}
		Some generalized $k$-Gaussian Mersenne numbers are listed in the following table.
	\begin{table}[h!]
		\begin{center}
			\begin{tabular}{c c c c c c c}
				\hline 
				$GM_{n}^{(k)}$ & k=1 & k=2  &  k=3 &  k=4 & k=5   \\
				\hline
				$GM_{0}^{(k)}$ & -i/2 & -1/4  &  i/8 &  1/16 & -i/32   \\
				
				$GM_{1}^{(k)}$  &  1 & -i/2  &  -1/4 &  i/8 & 1/16   \\
				
				$GM_{2}^{(k)}$ & 3+i & 1  &  -i/2 &  -1/4 & i/8   \\
				
				$GM_{3}^{(k)}$& 7+3i & 3+i  &  1 &  -i/2 & -i/4  \\
				
				$GM_{4}^{(k)}$ & 15+7i & 8+6i  &  3+i &  1 & -i/2  \\
				
				$GM_{5}^{(k)}$& 31+15i & 18+16i  &  8+6i  &  3+i &  1  \\
				\hline	
			\end{tabular}
			\caption{List of some generalized $k$-Gaussian Mersenne numbers $GM_{n}^{(k)}$} 
		\end{center}
	\end{table}
 
	\begin{proposition}\label{obsr2}
		Let $k,n \in \mathbb{N}$, then we have $ GM_{nk}^{(k)}=GM_{n}^{k}$.
	\end{proposition}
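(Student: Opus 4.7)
The plan is to mimic the proof of Proposition \ref{obsr} almost verbatim, replacing the relation (\ref{relation}) by its Gaussian analogue (\ref{Gaussrelation}). The key observation is that for the specific index $nk$, the division algorithm representation $nk = sk + r$ with $0 \le r < k$ is forced to be the trivial one $s = n$, $r = 0$, because any other decomposition would violate uniqueness.

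First I would invoke the defining relation (\ref{Gaussrelation}), namely
\begin{equation*}
GM_{m}^{(k)} = GM_{s}^{k-r} GM_{s+1}^{r}, \qquad m = sk + r,\ 0 \le r < k.
\end{equation*}
Specializing to $m = nk$, uniqueness of $(s,r)$ gives $s = n$ and $r = 0$. Substituting these values makes the second factor $GM_{n+1}^{0} = 1$ and reduces the first factor to $GM_{n}^{k}$, which is exactly the claimed identity.

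The argument is a one-line substitution, so there is essentially no obstacle; the only thing worth flagging is that the proposition is genuinely the Gaussian mirror of Proposition \ref{obsr}, and so the authors' statement "It can be proved similar to Proposition \ref{obsr}" is the natural way to present it. If one wanted to be a touch more careful, one could alternatively observe that Definition \ref{Gaussl-mersenne} itself, evaluated at $r = 0$, collapses to $\bigl(\lambda_{1}^{s}-\lambda_{2}^{s}+i(\lambda_{1}^{s-1}-\lambda_{2}^{s-1})\bigr)^{k}$, which by the Binet formula (\ref{GaussBinet}) is precisely $GM_{s}^{k} = GM_{n}^{k}$; this bypasses (\ref{Gaussrelation}) and gives a self-contained derivation straight from the definition.
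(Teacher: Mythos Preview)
Your proposal is correct and matches the paper's intended argument exactly: the paper states Proposition~\ref{obsr2} without proof, relying on the reader to repeat the one-line computation from Proposition~\ref{obsr} with eqn.~(\ref{Gaussrelation}) in place of eqn.~(\ref{relation}), which is precisely what you do. Your alternative derivation directly from Definition~\ref{Gaussl-mersenne} and the Binet formula is also valid but unnecessary here.
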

	\begin{theorem}
		For $n, s \in \mathbb{N}$, we have $GM_{sn+1}^{(s)} = 3GM_{sn}^{(s)}-2GM_{sn-1}^{(s)}$.
	\end{theorem}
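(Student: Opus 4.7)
The plan is to mirror exactly the argument used earlier for the non-Gaussian case, since the generalized $k$-Gaussian Mersenne numbers admit the same multiplicative decomposition (equation (\ref{Gaussrelation})) and the base Gaussian Mersenne sequence $\{GM_n\}$ satisfies the same second-order recurrence $GM_{n+1} = 3GM_n - 2GM_{n-1}$ as in Definition \ref{gaussmersene}. So the identity should reduce to a one-line manipulation after I decompose each of the three superscripted terms via (\ref{Gaussrelation}).

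First I would carefully write the three relevant indices in the canonical form $n' = s' s + r'$ with $0 \le r' < s$. For $n' = sn$ we get $s'=n$, $r'=0$, hence $GM_{sn}^{(s)} = GM_n^{s}$ (which is Proposition \ref{obsr2}). For $n' = sn+1$ we get $s'=n$, $r'=1$, hence $GM_{sn+1}^{(s)} = GM_n^{s-1} GM_{n+1}$. For $n' = sn-1$ the decomposition is $sn-1 = (n-1)s + (s-1)$, so $s' = n-1$, $r' = s-1$, giving $GM_{sn-1}^{(s)} = GM_{n-1}^{\,1} GM_n^{\,s-1} = GM_{n-1} GM_n^{s-1}$. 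Identifying this last term correctly is the only mildly non-routine step, and it is the step I would highlight as the main (minor) obstacle — one has to recognize that $sn-1$ sits in the block indexed by $n-1$, not $n$.

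With these three identifications in hand, the computation is immediate:
\begin{eqnarray*}
3 GM_{sn}^{(s)} - 2 GM_{sn-1}^{(s)} &=& 3 GM_n^{s} - 2 GM_{n-1} GM_n^{s-1} \\
&=& GM_n^{s-1}\bigl(3 GM_n - 2 GM_{n-1}\bigr) \\
&=& GM_n^{s-1} GM_{n+1} \\
&=& GM_{sn+1}^{(s)},
\end{eqnarray*}
where the penultimate equality uses Definition \ref{gaussmersene}. This completes the sketch, and it shows that the theorem really is just the Gaussian transcript of its classical counterpart proved earlier in the paper.
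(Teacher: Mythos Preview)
Your proposal is correct and follows essentially the same approach as the paper: both use equation~(\ref{Gaussrelation}) to decompose $GM_{sn}^{(s)}$, $GM_{sn-1}^{(s)}$, and $GM_{sn+1}^{(s)}$, then factor out $GM_n^{s-1}$ and apply the recurrence from Definition~\ref{gaussmersene}. Your write-up is in fact more explicit than the paper's, since you spell out the division-with-remainder for $sn-1 = (n-1)s + (s-1)$, which the paper uses silently.
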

	\begin{proof}
		From eqn.(\ref{Gaussrelation}), we get
		\begin{eqnarray}
			3GM_{sn}^{(s)}-2GM_{sn-1}^{(s)} &=& 3GM_{n}^{s}-2GM_{n-1}GM_{n}^{s-1} \nonumber\\
			%&=& GM_{n}^{(s-1)}(3GM_{n}-2GM_{n-1})\nonumber\\
			&=& GM_{n}^{s-1}GM_{n+1}\nonumber\\
			&=& GM_{sn+1}^{(s)}.\nonumber
		\end{eqnarray}
	\end{proof}
	\begin{theorem}
		For $k,s \in \mathbb{N}$, we have $GM_{s+1}^{k}-GM_{s}^{k}=GM_{sk+k}^{(k)}-GM_{sk}^{(k)}$.
	\end{theorem}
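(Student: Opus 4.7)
The proof mirrors exactly the earlier result for $M_{s+1}^{k}-M_{s}^{k}=M_{sk+k}^{(k)}-M_{sk}^{(k)}$, with the relation (\ref{Gaussrelation}) playing the role that (\ref{relation}) played in the classical case. The plan is simply to evaluate both $GM_{sk+k}^{(k)}$ and $GM_{sk}^{(k)}$ by unpacking the definition via the unique decomposition $n=s'k+r'$ with $0\le r'<k$, and then subtract.

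First I would handle $GM_{sk}^{(k)}$: writing $sk=sk+0$ forces $s'=s$ and $r'=0$, so Proposition \ref{obsr2} (equivalently, a direct application of (\ref{Gaussrelation})) gives $GM_{sk}^{(k)}=GM_{s}^{k}$. Next I would handle $GM_{sk+k}^{(k)}$: here the unique representation with $0\le r'<k$ is $sk+k=(s+1)k+0$, yielding $s'=s+1$ and $r'=0$, so again $GM_{sk+k}^{(k)}=GM_{s+1}^{k}$. Equivalently, one can formally read off $GM_{sk+k}^{(k)}=GM_{s}^{k-k}GM_{s+1}^{k}=GM_{s+1}^{k}$ directly from (\ref{Gaussrelation}), exactly as done in the classical analogue earlier in the paper. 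Subtracting the two expressions gives $GM_{sk+k}^{(k)}-GM_{sk}^{(k)}=GM_{s+1}^{k}-GM_{s}^{k}$, which is the claim.

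There is no real obstacle: the identity is a purely formal consequence of the product decomposition (\ref{Gaussrelation}), and nothing about the Gaussian nature of the numbers (such as the split into real and imaginary parts or the specific form of the Binet expression (\ref{GaussBinet})) is needed. The only minor point to be careful about is the boundary case of the representation $n=s'k+r'$ at $n=sk+k$, where one must shift the quotient from $s$ to $s+1$ so that the remainder stays in the range $0\le r'<k$; this is the same bookkeeping that appears in the earlier Mersenne version and causes no difficulty.
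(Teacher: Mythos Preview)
Your proof is correct and follows essentially the same route as the paper: both evaluate $GM_{sk+k}^{(k)}$ and $GM_{sk}^{(k)}$ directly from the product formula (\ref{Gaussrelation}) and subtract. You are in fact slightly more careful than the paper in handling the decomposition at $n=sk+k$, correctly shifting the quotient to $s+1$ so that the remainder stays in range, whereas the paper simply writes $GM_{s}^{k-k}GM_{s+1}^{k}$ formally.
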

	\begin{proof}
		From eqn.(\ref{Gaussrelation}), we obtain
		\begin{eqnarray}
			GM_{sk+k}^{(k)}-GM_{sk}^{(k)} &=& [GM_{s}^{k-k}GM_{s+1}^{k}]-[GM_{s}^{k-0}GM_{s+1}^{0}] \nonumber\\
			&=& GM_{s+1}^{k}-GM_{s}^{k}. \nonumber 
		\end{eqnarray}
	\end{proof}
	\begin{theorem}
		Let $n,m \ge 0$ such that $n+m>1$, then we have
		\begin{equation}
			GM_{2(n+m-1)}^{(2)}-GM_{n+m}GM_{n+m-2}=(2^{n+m-1}-2^{n+m-2})+i(3.2^{n+m-2})\nonumber
		\end{equation}
	\end{theorem}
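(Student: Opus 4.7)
The plan is to collapse the left-hand side into a standard Cassini-type expression and then invoke the Cassini identity for the Gaussian Mersenne numbers that has already been established. Specifically, first I would apply Proposition \ref{obsr2} in the case $k=2$ (with index $n+m-1$ in place of $n$), which immediately yields
\begin{equation*}
GM_{2(n+m-1)}^{(2)} = GM_{n+m-1}^{2}.
\end{equation*}
Substituting this into the left-hand side reduces the theorem to proving
\begin{equation*}
GM_{n+m-1}^{2} - GM_{n+m}\,GM_{n+m-2} = (2^{n+m-1}-2^{n+m-2}) + i\,(3\cdot 2^{n+m-2}).
\end{equation*}

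Next I would invoke Cassini's identity (\ref{gaussCassini}) with $n$ replaced by $n+m-1$ (which is legal because the hypothesis $n+m>1$ ensures $n+m-1\ge 1$). This gives an explicit formula for $GM_{n+m}GM_{n+m-2} - GM_{n+m-1}^{2}$, and the desired identity then follows simply by multiplying through by $-1$ and tidying the resulting powers of $2$ in the real and imaginary parts.

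The only real obstacle is bookkeeping with the exponents of $2$ after the sign flip: I would test the reduction on a small case such as $n+m=3$ (using $GM_{2}=3+i$, $GM_{3}=7+3i$, $GM_{4}=15+7i$) to make sure the final exponents on the right-hand side line up correctly with the normalization used in (\ref{gaussCassini}) before writing out the two-line derivation. No new ingredients beyond Proposition \ref{obsr2} and Theorem (\ref{gaussCassini}) are needed.
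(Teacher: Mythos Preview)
Your proposal is correct and follows exactly the paper's own argument: the paper also reduces $GM_{2(n+m-1)}^{(2)}$ to $GM_{n+m-1}^{2}$ via Proposition~\ref{obsr2} and then applies Cassini's identity~(\ref{gaussCassini}) at the shifted index $n+m-1$. Your plan to sanity-check the exponents on a concrete case is prudent, since the sign flip and index shift are the only places where bookkeeping can slip.
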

	\begin{proof}
		By eqn.(\ref{gaussCassini}) and proposition(\ref{obsr2}), we get
		\begin{equation}
			GM_{2(n+m-1)}^{(2)}-GM_{n+m}GM_{n+m-2}=GM_{(n+m-1)}^{2}-GM_{n+m}M_{n+m-2}=(2^{n+m-1}-2^{n+m-2})+i(3.2^{n+m-2}).\nonumber
		\end{equation}
	\end{proof}
	\begin{theorem}
		Let $n,k \ge 2$, then Cassini's identity for $GM_{n}^{(k)}$ is,
		$$GM_{nk+a-1}^{2}-GM_{nk+a}^{(k)}GM_{nk+a-2}^{(k)}=
		\begin{cases}
			GM_{n}^{2k-2}\left[(2^{n-1}-2^{n-2})+i(3.2^{n-2})\right],~~~~ a = 1 \\
			0, ~~~~ a \ne 1\\
		\end{cases}$$.
	\end{theorem}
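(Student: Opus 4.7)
The plan is to mirror the proof of the Cassini identity for $M_n^{(k)}$ given earlier in the paper, substituting the product decomposition (\ref{Gaussrelation}) for (\ref{relation}) and invoking the Gaussian Cassini identity (\ref{gaussCassini}) in place of the classical one (\ref{Cassini}) at the final step. The computational backbone is the factorization $GM_m^{(k)} = GM_s^{k-r}GM_{s+1}^r$ where $m = sk+r$ with $0\le r<k$.

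First I would dispose of the easy case $a\neq 1$. When $1\le a-1$ (strictly) and $a\le k-1$, the three indices $nk+a$, $nk+a-1$, $nk+a-2$ all have quotient $n$ on division by $k$, with remainders $a$, $a-1$, $a-2$ respectively. Applying (\ref{Gaussrelation}) three times gives
\begin{align*}
GM_{nk+a}^{(k)}\,GM_{nk+a-2}^{(k)} &= GM_n^{2k-2a+2}\,GM_{n+1}^{2a-2},\\
\bigl(GM_{nk+a-1}^{(k)}\bigr)^{2} &= GM_n^{2k-2a+2}\,GM_{n+1}^{2a-2},
\end{align*}
so the difference is $0$. The edge values $a=0$ and $a=k-1$ require a parallel check in which the remainder $a-2$ or $a$ wraps around, but the same cancellation occurs with $GM_{n-1}, GM_n, GM_{n+1}$ appearing symmetrically.

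Next, for the key case $a=1$, I would exploit the fact that $nk-1 = (n-1)k + (k-1)$, so the quotient drops to $n-1$ and the remainder becomes $k-1$. Then (\ref{Gaussrelation}) yields $GM_{nk+1}^{(k)} = GM_n^{k-1}GM_{n+1}$, $GM_{nk}^{(k)} = GM_n^{k}$, and $GM_{nk-1}^{(k)} = GM_{n-1}GM_n^{k-1}$. Factoring $GM_n^{2k-2}$ out of the difference gives
\begin{equation*}
\bigl(GM_{nk}^{(k)}\bigr)^{2} - GM_{nk+1}^{(k)}\,GM_{nk-1}^{(k)} = GM_n^{2k-2}\bigl(GM_n^{2} - GM_{n-1}GM_{n+1}\bigr).
\end{equation*}
Finally, I would apply (\ref{gaussCassini}) in the form $GM_n^{2} - GM_{n-1}GM_{n+1} = (2^{n-1}-2^{n-2}) + i(3\cdot 2^{n-2})$, which reproduces the claimed right-hand side.

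The main obstacle, modest as it is, will be the bookkeeping at $a=1$: tracking the quotient change from $n$ to $n-1$ when the index decreases past a multiple of $k$, and ensuring that the sign convention on the left-hand side matches (\ref{gaussCassini}) after negation (the paper's (\ref{gaussCassini}) is stated as $GM_{n+1}GM_{n-1}-GM_n^2$, whereas here we need $GM_n^2 - GM_{n-1}GM_{n+1}$, so the real part flips sign while the imaginary part changes sign of its coefficient too). All other steps are routine substitutions from (\ref{Gaussrelation}).
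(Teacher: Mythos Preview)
Your proposal is correct and follows essentially the same route as the paper: both arguments reduce everything to the product formula (\ref{Gaussrelation}), factor out $GM_n^{2k-2}$ in the $a=1$ case, and then invoke the Gaussian Cassini identity (\ref{gaussCassini}). Your explicit remark about negating (\ref{gaussCassini}) to match the sign convention of the theorem is in fact more careful than the paper's own write-up, which silently swaps the order of the difference between the statement and the proof.
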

	\begin{proof}
		If $a=1$, so from eqn.(\ref{Gaussrelation})
		\begin{eqnarray}
			GM_{nk+1}^{(k)}GM_{nk-1}^{(k)}-(GM_{nk}^{(k)})^{2} &=& (GM_{n}^{k-1}GM_{n+1})(GM_{n-1}GM_{n}^{k-1})-(GM_{n}^{k})^{2} \nonumber\\
			&=& GM_{n}^{2k-2}[GM_{n+1}GM_{n-1}-(GM_{n})^{2}]\nonumber\\
			&=& GM_{n}^{2k-2}\left[(2^{n-1}-2^{n-2})+i(3.2^{n-2})\right].\nonumber \hspace{.5in} \text{(Using eqn.(\ref{gaussCassini}))}\nonumber
		\end{eqnarray}	 
		and if $a\ne 1$, then by eqn.(\ref{Gaussrelation})
		\begin{eqnarray}
			GM_{nk+a}^{(k)}GM_{nk+a-2}^{(k)}-(GM_{nk+a-1}^{(k)})^{2}  &=&  (GM_{n}^{k-a}
			GM_{n+1}^{a})(GM_{n}^{k-a+2}GM_{n+1}^{a-2})-(GM_{n}^{k-a+1}GM_{n+1}^{a-1})^{2}\nonumber\\
			%&=& (GM_{n}^{l-l})^{2}[GM_{n+a}M_{n+a-2}-(GM_{n+a-1})^{2}]\nonumber\\
			&=& GM_{n}^{2k-2a+2}[GM_{n+1}^{2a-2}-(GM_{n+1})^{2a-2}].\nonumber\\
			&=& 0.
		\end{eqnarray}
	\end{proof}
%%%%%%%%%%%%%%%%%%%%%%%%%%%%%%%%%%%%%%%%%%%%%%%%%%%%%%%%%%%%%%%%%%%%%%%%%%%%%%%%%%%%%
	\subsection{Gaussian Mersenne Polynomials}
	\begin{definition}
		The \textbf{Gaussian Mersenne polynomials($GM_{k}(x)$)}  are defined by the recurrence relation,
		\begin{eqnarray}\label{gaussmersenepoly}
			GM_{k+2}(x) = 3xGM_{k+1}(x) -2GM_{k}(x), \hspace{.8cm}k\geq 0
		\end{eqnarray}
		with $GM_{0}=-i/2,~~GM_{1}=1$.
	\end{definition}
	The first few Gaussian Mersenne polynomials are $-i/2,~ 1,~ 9x^{2}-2+i3x,~ 27x^{3}-12x+i(9x^{2}-2).$
	\\Further, the Gaussian Mersenne polynomials and Mersenne polynomials are related as $$GM_{k+2} =M_{k+2}(x)+iM_{k+1}(x),$$ where $M_{k}(x)$ is the $k^{th}$-Mersenne polynomial.
	\begin{theorem}
		For every $n \in \mathbb{N}$, the Binet formula for the Gaussian Mersenne polynomials is
		\begin{equation}\label{GaussBinetpoly}
			GM_{n}(x)=\left(\dfrac{\lambda_{1}^{n}(x)-\lambda_{2}^{n}(x)}{\lambda_{1}(x)-\lambda_{2}(x)}\right)+i\left(\dfrac{\lambda_{1}^{n-1}(x)-\lambda_{2}^{n-1}(x)}{\lambda_{1}(x)-\lambda_{2}(x)}\right),
		\end{equation}
		where $\lambda_{1}(x)$ and $\lambda_{2}(x)$ are the roots of the characteristic equation(\ref{charpoly}).
	\end{theorem}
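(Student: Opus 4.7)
The plan is to mirror the argument used for the Binet formula of $M_n(x)$. Since $\{GM_n(x)\}$ satisfies the second-order linear recurrence (\ref{gaussmersenepoly}) whose characteristic equation is (\ref{charpoly}) with distinct roots $\lambda_1(x),\lambda_2(x)$, the general solution has the form
$$GM_n(x)=a\lambda_1^n(x)+b\lambda_2^n(x),$$
where $a,b$ are constants (in $n$, though they may involve $x$) uniquely pinned down by the initial values $GM_0(x)=-i/2$ and $GM_1(x)=1$.

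The next step is to solve the $2\times 2$ linear system $a+b=-i/2$ and $a\lambda_1(x)+b\lambda_2(x)=1$, which gives
$$a=\frac{1+\tfrac{i}{2}\lambda_2(x)}{\lambda_1(x)-\lambda_2(x)},\qquad b=-\,\frac{1+\tfrac{i}{2}\lambda_1(x)}{\lambda_1(x)-\lambda_2(x)}.$$
Substituting into the ansatz and separating the two types of terms produces
$$GM_n(x)=\frac{\lambda_1^n(x)-\lambda_2^n(x)}{\lambda_1(x)-\lambda_2(x)}+\frac{i}{2}\cdot\frac{\lambda_2(x)\lambda_1^n(x)-\lambda_1(x)\lambda_2^n(x)}{\lambda_1(x)-\lambda_2(x)}.$$
The crucial simplification is to pull $\lambda_1(x)\lambda_2(x)$ out of the second numerator and invoke Vieta's relation $\lambda_1(x)\lambda_2(x)=2$ for (\ref{charpoly}); this turns the second summand into $i\cdot\tfrac{\lambda_1^{n-1}(x)-\lambda_2^{n-1}(x)}{\lambda_1(x)-\lambda_2(x)}$, matching (\ref{GaussBinetpoly}) exactly.

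The main obstacle is purely bookkeeping: tracking the complex coefficients cleanly while applying Vieta at the right moment so that the exponent drops from $n$ to $n-1$ in precisely the imaginary part. A conceptually cleaner alternative, which I would keep in reserve as a cross-check, is to define $f_n(x):=M_n(x)+iM_{n-1}(x)$, observe that $f_n(x)$ satisfies (\ref{gaussmersenepoly}) by $\mathbb{C}$-linearity of (\ref{mersenepolynomial}), and verify $f_1(x)=M_1(x)=1$ and $f_0(x)=iM_{-1}(x)=-i/2$ (where $M_{-1}(x)=-1/2$ is read off from (\ref{mersenepolynomial}) run backward at $n=-1$); uniqueness of solutions to a linear recurrence with two prescribed initial values then forces $GM_n(x)=f_n(x)$, which is (\ref{GaussBinetpoly}).
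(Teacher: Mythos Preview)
Your proof is correct and follows the same approach as the paper: write the general solution $a\lambda_1^n(x)+b\lambda_2^n(x)$ and determine $a,b$ from the two initial conditions. Your treatment is in fact more careful than the paper's, which asserts the $x$-independent values $a=1+i/2$, $b=-1-i$ (valid only in the numerical case $x=1$); your explicit use of Vieta's relation $\lambda_1(x)\lambda_2(x)=2$ to drop the exponent in the imaginary part is precisely the step the paper glosses over.
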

	\begin{proof}
		The general term of Gaussian Mersenne polynomials for the recurrence relation(\ref{gaussmersenepoly}) can be obtained by,
		\begin{equation}\label{generalgausspoly}
			GM_{n}(x)= a\lambda_{1}^{n}(x)+b\lambda_{2}^{n}(x). 
		\end{equation}
		Using initial conditions given in eqn.(\ref{gaussmersenepoly}), we eliminate constants $a$ $\&$ $b$.
		\\Since, we have $GM_{0}= a+b$ and $GM_{1}= a\lambda_{1}(x)+b\lambda_{2}(x)$.
		\\It yields, $a=1+i/2$ and $b=-1-i$.
		\\Thus, from eqn.(\ref{generalgausspoly}), we have 
		$$GM_{n}(x)=\left(\dfrac{\lambda_{1}^{n}(x)-\lambda_{2}^{n}(x)}{\lambda_{1}(x)-\lambda_{2}(x)} \right)+i\left(\dfrac{\lambda_{1}^{n-1}(x)-\lambda_{2}^{n-1}(x)}{\lambda_{1}(x)-\lambda_{2}(x)}\right).$$
	\end{proof}
\begin{theorem}[Cassini's Identity]
	For $n \geq 1$, we have
	\begin{equation}\label{gaussCassinipoly}
		GM_{n+1}(x)GM_{n-1}(x)-GM_{n}^{2}(x)=(2^{n-2}-2^{n-1})-i(x3.2^{n-2}).
	\end{equation}
\end{theorem}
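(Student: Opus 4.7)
The plan is to mimic the strategy used to derive the Cassini identity (\ref{gaussCassini}) for the Gaussian Mersenne numbers, but this time leveraging Theorem~\ref{Cassinipolymial} for the Mersenne polynomials in place of eqn.~(\ref{Cassini}). First I would rewrite the Binet formula (\ref{GaussBinetpoly}) in the compact form $GM_{n}(x)=M_{n}(x)+iM_{n-1}(x)$, which is already implicitly used by the authors when they observe that $GM_{k+2}(x)=M_{k+2}(x)+iM_{k+1}(x)$. Expanding the product and the square, then grouping real and imaginary parts, gives
\begin{align*}
GM_{n+1}(x)GM_{n-1}(x)-GM_{n}^{2}(x) &= \bigl[M_{n+1}(x)M_{n-1}(x)-M_{n}^{2}(x)\bigr]-\bigl[M_{n}(x)M_{n-2}(x)-M_{n-1}^{2}(x)\bigr] \\
&\quad +i\bigl[M_{n+1}(x)M_{n-2}(x)-M_{n}(x)M_{n-1}(x)\bigr].
\end{align*}

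Applying Theorem~\ref{Cassinipolymial} at indices $n$ and $n-1$ immediately reduces the two real brackets to $-2^{n-1}$ and $-2^{n-2}$ respectively, so the real part collapses to $-2^{n-1}-(-2^{n-2})=2^{n-2}-2^{n-1}$, which matches the real part of the statement. The main obstacle is the imaginary bracket $M_{n+1}(x)M_{n-2}(x)-M_{n}(x)M_{n-1}(x)$, a shifted Cassini-type expression with index difference three that is not stated explicitly anywhere in the paper.

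To handle it I would substitute the Binet formula (\ref{Binetpoly}), expand the four products, cancel the $\lambda_{1}^{2n-1}$ and $\lambda_{2}^{2n-1}$ terms, and simplify to
\[
(\lambda_{1}(x)-\lambda_{2}(x))^{2}\bigl[M_{n+1}(x)M_{n-2}(x)-M_{n}(x)M_{n-1}(x)\bigr]=(\lambda_{1}\lambda_{2})^{n-1}(\lambda_{1}+\lambda_{2})-(\lambda_{1}\lambda_{2})^{n-2}(\lambda_{1}^{3}+\lambda_{2}^{3}),
\]
then use the Vieta relations $\lambda_{1}+\lambda_{2}=3x$ and $\lambda_{1}\lambda_{2}=2$ coming from (\ref{charpoly}), together with the derived identities $\lambda_{1}^{3}+\lambda_{2}^{3}=27x^{3}-18x$ and $(\lambda_{1}-\lambda_{2})^{2}=9x^{2}-8$. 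The numerator becomes $2^{n-2}\bigl(6x-(27x^{3}-18x)\bigr)=-3x\cdot 2^{n-2}(9x^{2}-8)$, so the quotient collapses cleanly to $-3x\cdot 2^{n-2}$, which supplies exactly the $-i(3x\cdot 2^{n-2})$ imaginary part of the stated identity. A fallback I would keep in reserve is induction on $n$ modelled on the proof of Theorem~\ref{Cassinipolymial}, using the recurrence (\ref{gaussmersenepoly}) in the inductive step to rewrite $GM_{n+2}(x)$ and $GM_{n+1}(x)$; this trades the Vieta computation for a longer but self-contained algebraic manipulation within the framework already developed in the paper.
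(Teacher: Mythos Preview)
Your argument is correct and is genuinely different from the paper's own proof. The paper proceeds by direct induction on $n$: it checks $n=1$ by hand, assumes the identity at $n=k$, and for $n=k+1$ rewrites $GM_{k+2}(x)$ and $GM_{k}(x)$ via the recurrence (\ref{gaussmersenepoly}) to reduce to the induction hypothesis. Your route instead splits $GM_{n}(x)=M_{n}(x)+iM_{n-1}(x)$ and recycles Theorem~\ref{Cassinipolymial} for the real part, supplementing it with a Binet/Vieta computation for the cross term $M_{n+1}(x)M_{n-2}(x)-M_{n}(x)M_{n-1}(x)$. What your approach buys is conceptual transparency: the Gaussian Cassini identity is seen to be nothing more than two copies of the ordinary polynomial Cassini identity glued to a single shifted Catalan-type term, and the same template would immediately generalise (e.g.\ to a Catalan identity for $GM_{n}(x)$). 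What the paper's induction buys is self-containment: it never needs the auxiliary identity $M_{n+1}(x)M_{n-2}(x)-M_{n}(x)M_{n-1}(x)=-3x\cdot 2^{n-2}$, nor the Vieta relations, nor any reference to $M_{-1}(x)$. On that last point, note that your decomposition uses $M_{n-2}(x)$ and the Cassini identity at index $n-1$, so strictly speaking it only runs for $n\ge 2$; you should still verify $n=1$ separately, exactly as the paper does for its base case. Amusingly, the fallback you keep ``in reserve'' is precisely the paper's chosen method.
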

 \begin{proof}
 	We prove it using mathematical induction on $n$. For $n=1$, 
 	\begin{equation}
 		GM_{2}(x)GM_{0}(x)-GM_{1}^{2}(x)=(3x+i)(-i/2)-1=(2^{-1}-1)-i(x3.2^{-1}).\nonumber
 	\end{equation}  
 	So, the statement is true for $n=1$.
 	\\Assume that result is true for $n=k$, $i.e.$ 
 	\begin{equation}\label{Cassinipoly3}
 		GM_{k+1}(x)GM_{k-1}(x)-GM_{k}^{2}(x)=(2^{k-2}-2^{k-1})-i(x3.2^{k-2}). 
 	\end{equation}
 	Now, for $n=k+1$, using eqn.(\ref{gaussmersenepoly}) and eqn.(\ref{Cassinipoly3}), we have
 	\begin{eqnarray}
 		GM_{k+2}(x)GM_{k}(x)-GM_{k+1}^{2}(x)
 		&=&\left[\left(3xGM_{k+1}(x)-2GM_{k}(x)\right)\left(\frac{1}{3x}GM_{k+1}(x)+\frac{2}{3x}GM_{k-1}(x)\right)\right]\nonumber\\
 		& & -GM_{k+1}^{2}(x) \hspace{1in} \text{(Using eqn.(\ref{gaussmersenepoly}))}\nonumber\\
 		&=& 2GM_{k+1}(x)GM_{k-1}(x)-\frac{2}{3x}GM_{k+1}(x)GM_{k}(x)\nonumber\\
 		& & -\frac{4}{3x}GM_{k}(x)GM_{k-1}(x)\nonumber\\
 		&=& 2GM_{k}^{2}(x)+2.(2^{k-2}-2^{k-1})-i(x3.2^{k-2})-\frac{2}{3x}GM_{k+1}(x)GM_{k}(x)\nonumber\\
 		& & +\frac{4}{3x}GM_{k-1}(x)GM_{k}(x)\nonumber\\
 		&=& (2^{k-1}-2^{k})-i(x3.2^{k-1}).\nonumber
 	\end{eqnarray}
 \end{proof}

%%%%%%%%%%%%%%%%%%%%%%%%%%%%%%%%%%%%%%%%%%%%%%%%%%%%%%%%%%%%%%%%%%%%%%%%%%%%%%
	\subsection{ Generalized $k$-Gaussian Mersenne polynomials}
	\begin{definition}\label{Gaussl-mersennepoly}
		Let $n,k\in \mathbb{N}$, then $\exists$ $s,r$ such that $n=sk+r$, $0 \le r <k$. The generalized $k$-Gaussian Mersenne polynomials $(GM_{n}^{(k)}(x))$ are defined by 
		\begin{equation}
			GM_{n}^{(k)}(x)=\left(\dfrac{ (\lambda_{1}^{s}-\lambda_{2}^{s})+i(\lambda_{1}^{s-1}-\lambda_{2}^{s-1})}{\lambda_{1}(x)-\lambda_{2}(x)}\right)^{k-r}\left( \dfrac{(\lambda_{1}^{s+1}-\lambda_{2}^{s+1})+i(\lambda_{1}^{s}-\lambda_{2}^{s})}{\lambda_{1}(x)-\lambda_{2}(x)} \right)^{r},~~~~~n=sk+r
		\end{equation}
		where $\lambda_{1}(x)$ and $\lambda_{2}(x)$ are the roots of the characteristic equation (\ref{charpoly}).
	\end{definition}
	The relation between the generalized $k$-Gaussian Mersenne polynomials and Gaussian Mersenne polynomials are 
	%(See eqn.(\ref{GaussBinetpoly}) \& definition(\ref{Gaussl-mersennepoly}))
	\begin{equation}\label{Gaussrelationpoly}
		GM_{n}^{(k)}(x)=GM_{s}^{k-r}(x)GM_{s+1}^{r}(x),~~~n=sk+r.
	\end{equation}
	If $k=1$ then $r=0$ and hence $m=n$. So, from eqn.(\ref{Gaussrelationpoly}) we have $GM_{n}^{(1)}(x)= GM_{n}(x)$.
	\\
	 The generalized $k$-Gaussian Mersenne polynomials and Gaussian Mersenne polynomials holds the following relations for $k=2,3$. 
	 \begin{enumerate}
	 	\item $ GM_{2n}^{(2)}(x)=GM_{n}^{2}(x)$.
	 	\item $ GM_{2n+1}^{(2)}(x)=GM_{n}(x)GM_{n+1}(x)$.
	 	\item $ GM_{2n+1}^{(2)}(x)=3xGM_{2n}^{(2)}(x)-2GM_{2n-1}^{(2)}(x)$.
	 	\item $ GM_{3n}^{(3)}(x)=GM_{n}^{3}(x)$.
	 	\item $ GM_{3n+1}^{(3)}(x)=GM_{n}^{2}(x)GM_{n+1}(x)$.
	 	\item $ GM_{3n+1}^{(3)}(x)=3xGM_{3n}^{(3)}(x)-2GM_{3n-1}^{(3)}(x)$.
	 	\item $ GM_{3n+2}^{(3)}(x)=GM_{n}(x)GM_{n+1}^{2}(x)$.	
	\end{enumerate}
	Following table shows the list of some generalized $k$-Gaussian Mersenne polynomials $(GM_{n}^{(k)}(x))$.
	\begin{table}[h] 
		\centering	
 		\begin{tabular}{c c c c c c c}
 			\hline 
 			$GM_{n}^{(k)}(x)$ & k=1 & k=2  &  k=3 &  k=4 & k=5   \\
 			\hline
 			$GM_{0}^{(k)}(x)$ & -i/2 & -1/4  &  i/8 &  1/16 & -i/32   \\
 			
 			$GM_{1}^{(k)}(x)$  &  1 & -i/2  &  -1/4 &  i/8 & 1/16   \\
 			
 			$GM_{2}^{(k)}(x)$ & 3$x$+i & 1  &  -i/2 &  -1/4 & i/8   \\
 			
 			$GM_{3}^{(k)}(x)$& 9$x^{2}$-2+i3$x$,  & 3$x$+i  &  1 &  -i/2 & -i/4  \\
 			
 			$GM_{4}^{(k)}(x)$ & 27$x^{3}$-12x+i(9$x^{2}$-2) & (9$x^{2}$-1)+i6$x$  &  3$x$+i &  1 & -i/2  \\
 			
 			$GM_{5}^{k}(x)$& (81$x^{4}$-54$x^{2}$+4)+i(27$x^{3}$-12x) &(27$x^{3}$-3$x^{2}$-6$x$)+i(18$x^{2}$-2)  &  (9$x^{2}$-1)+i6$x$  &  3$x$+i &  1  \\
 			\hline	
 		\end{tabular}
 	\caption{List of some generalized $k$-Gaussian Mersenne polynomials $GM_{n}^{(k)}(x)$} 
 \end{table}
 
 \begin{proposition}\label{obsr3}
 	Let $k,n \in \mathbb{N}$, then we have $ GM_{kn}^{(k)}(x)=GM_{n}^{k}(x)$.
 \end{proposition}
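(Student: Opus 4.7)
The plan is to mimic exactly the argument given for Proposition \ref{obsr} (and implicitly for Proposition \ref{obsr2}), since the polynomial version is structurally identical; the only ingredient needed is the product decomposition in eqn.(\ref{Gaussrelationpoly}).

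First I would write $kn$ in the canonical form $n = sk + r$ required by Definition \ref{Gaussl-mersennepoly}. Here the decomposition is immediate: with dividend $kn$ and divisor $k$, the unique pair $(s,r)$ with $0 \le r < k$ is $s = n$ and $r = 0$. This is the only nontrivial observation in the proof, and it is really just a statement of uniqueness of division with remainder.

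Next I would substitute these values into the already-established product formula eqn.(\ref{Gaussrelationpoly}):
\begin{equation*}
GM_{kn}^{(k)}(x) = GM_{s}^{k-r}(x)\, GM_{s+1}^{r}(x) = GM_{n}^{k-0}(x)\, GM_{n+1}^{0}(x) = GM_{n}^{k}(x),
\end{equation*}
using the convention $GM_{n+1}^{0}(x) = 1$. This closes the argument.

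There is no genuine obstacle here; the main point is simply to recognize that the identity is a direct specialization of eqn.(\ref{Gaussrelationpoly}) at $r = 0$, exactly as in the integer case handled in Proposition \ref{obsr2}. One could equivalently note that the definition itself forces the second factor to be empty when $r = 0$, so the result follows directly from Definition \ref{Gaussl-mersennepoly} without even invoking the Binet-type simplification.
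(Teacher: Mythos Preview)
Your proof is correct and follows exactly the template the paper uses for Proposition~\ref{obsr}; the paper itself states Proposition~\ref{obsr3} without proof, so your specialization of eqn.~(\ref{Gaussrelationpoly}) at $s=n$, $r=0$ is precisely the intended argument.
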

 \begin{theorem}
 	For $n, s \in \mathbb{N}$, we have $ GM_{sn+1}^{(s)}(x)= 3xGM_{sn}^{(s)}(x)-2GM_{sn-1}^{(s)}(x)$.
 \end{theorem}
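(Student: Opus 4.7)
The plan is to mirror exactly the proof of the analogous identity for the non-Gaussian generalized $k$-Mersenne polynomials, since the generalized $k$-Gaussian Mersenne polynomial has been defined in a multiplicative way (eqn.(\ref{Gaussrelationpoly})) that is structurally identical to the ordinary case. The only input from the Gaussian setting will be the defining three-term recurrence (\ref{gaussmersenepoly}), which behaves identically to its real counterpart because it is linear and the Gaussian initial values only affect the coefficients, not the recursion.

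First I would invoke eqn.(\ref{Gaussrelationpoly}) at the three relevant indices. Writing $sn = s\cdot n + 0$, $sn-1 = s(n-1) + (s-1)$, and $sn+1 = sn + 1$ (so the remainders are $0$, $s-1$, and $1$ respectively), these become
\begin{equation*}
GM_{sn}^{(s)}(x) = GM_{n}^{s}(x), \qquad GM_{sn-1}^{(s)}(x) = GM_{n-1}(x)\,GM_{n}^{s-1}(x), \qquad GM_{sn+1}^{(s)}(x) = GM_{n}^{s-1}(x)\,GM_{n+1}(x).
\end{equation*}
The first of these is also recorded as Proposition \ref{obsr3}.

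Next I would substitute the first two expressions into the right-hand side $3x\,GM_{sn}^{(s)}(x) - 2\,GM_{sn-1}^{(s)}(x)$, factor out the common power $GM_{n}^{s-1}(x)$, and obtain
\begin{equation*}
GM_{n}^{s-1}(x)\bigl(3x\,GM_{n}(x) - 2\,GM_{n-1}(x)\bigr).
\end{equation*}
Applying the Gaussian Mersenne polynomial recurrence (\ref{gaussmersenepoly}), the parenthetical factor collapses to $GM_{n+1}(x)$, leaving $GM_{n}^{s-1}(x)\,GM_{n+1}(x)$, which is precisely $GM_{sn+1}^{(s)}(x)$ from the third identity above.

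There is really no obstacle here: the whole argument is a two-line manipulation, and the only conceptual check is making sure the index decomposition $n = sk + r$ is applied correctly at $sn-1$ (where the quotient drops to $n-1$ and the remainder jumps to $s-1$). Once that bookkeeping is right, the linearity of the recurrence takes care of the complex coefficients automatically, so the proof carries over verbatim from its real-polynomial analogue.
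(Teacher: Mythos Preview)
Your proposal is correct and follows exactly the same approach as the paper's proof: invoke eqn.~(\ref{Gaussrelationpoly}) to rewrite the three terms, factor out $GM_{n}^{s-1}(x)$, and apply the recurrence (\ref{gaussmersenepoly}). If anything, your write-up is slightly more explicit than the paper's, since you spell out the quotient--remainder decompositions at $sn-1$, $sn$, and $sn+1$ that the paper leaves implicit.
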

 \begin{proof}
 	From equation(\ref{Gaussrelationpoly}),
 	\begin{eqnarray}
 		3xGM_{sn}^{(s)}(x)-2GM_{sn-1}^{(s)}(x) &=& 3xGM_{n}^{s}(x)-2GM_{n-1}(x)GM_{n}^{s-1}(x)\nonumber\\
 		%&=& GM_{n}^{(s-1)}(x)(3xGM_{n}(x)-2GM_{n-1}(x))\nonumber\\
 		&=& GM_{n}^{s-1}(x)GM_{n+1}(x)\nonumber\\
 		&=& GM_{sn+1}^{(s)}(x).\nonumber
 	\end{eqnarray}
 \end{proof}
 \begin{theorem}
 	For $k,s \in \mathbb{N}$, we have $GM_{s+1}^{k}(x)-GM_{s}^{k}(x)=GM_{sk+k}^{(k)}(x)-GM_{sk}^{(k)}(x)$.
 \end{theorem}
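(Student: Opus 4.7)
The plan is to mirror exactly the argument used for the analogous (non-Gaussian) statement $M_{s+1}^{k}-M_{s}^{k}=M_{sk+k}^{(k)}-M_{sk}^{(k)}$, which was a one-line application of the defining product relation. The only nontrivial observation is that equation (\ref{Gaussrelationpoly}), $GM_{n}^{(k)}(x)=GM_{s}^{k-r}(x)GM_{s+1}^{r}(x)$ for $n=sk+r$, continues to give the correct value in the boundary case $r=k$ (equivalently, we can evaluate $GM_{sk+k}^{(k)}(x)$ either by taking $r=k$ with the index $s$ unchanged, or by reparametrizing as $sk+k=(s+1)k$ with remainder $r=0$); either reading yields the same result, as it did in the Mersenne-number case.

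First I will apply (\ref{Gaussrelationpoly}) directly to each of the two terms on the right-hand side. For $GM_{sk}^{(k)}(x)$, the decomposition $sk=s\cdot k+0$ gives $r=0$ and hence
\begin{equation*}
GM_{sk}^{(k)}(x)=GM_{s}^{k-0}(x)\,GM_{s+1}^{0}(x)=GM_{s}^{k}(x).
\end{equation*}
For $GM_{sk+k}^{(k)}(x)$, using the decomposition $sk+k=s\cdot k+k$ (or equivalently $(s+1)k+0$), we obtain
\begin{equation*}
GM_{sk+k}^{(k)}(x)=GM_{s}^{k-k}(x)\,GM_{s+1}^{k}(x)=GM_{s+1}^{k}(x).
\end{equation*}

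Subtracting these two identities yields $GM_{sk+k}^{(k)}(x)-GM_{sk}^{(k)}(x)=GM_{s+1}^{k}(x)-GM_{s}^{k}(x)$, which is the claimed equality. There is no real obstacle here: everything reduces to careful bookkeeping in the Euclidean-division parametrization $n=sk+r$, and since the polynomial version of the defining product relation (\ref{Gaussrelationpoly}) has already been established, no induction, Binet formula, or algebraic manipulation on the roots $\lambda_{1}(x),\lambda_{2}(x)$ is needed. The proof will therefore be presented as a short two-line computation in the same style as the corresponding theorem for $M_{n}^{(k)}$ in Section~2.
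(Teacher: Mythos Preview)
Your proposal is correct and matches the paper's own proof essentially line for line: the paper also applies equation~(\ref{Gaussrelationpoly}) with $r=k$ and $r=0$ to write $GM_{sk+k}^{(k)}(x)-GM_{sk}^{(k)}(x)=GM_{s}^{k-k}(x)GM_{s+1}^{k}(x)-GM_{s}^{k-0}(x)GM_{s+1}^{0}(x)=GM_{s+1}^{k}(x)-GM_{s}^{k}(x)$. Your additional remark that the boundary reading $r=k$ agrees with the reparametrization $(s+1)k+0$ is a helpful clarification but not something the paper makes explicit.
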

 \begin{proof}
 	From equation(\ref{Gaussrelationpoly}), we obtain
 	\begin{eqnarray}
 		GM_{sk+k}^{(k)}(x)-GM_{sk}^{(k)}(x) &=& [GM_{s}^{k-k}(x)GM_{s+1}^{k}(x)]-[GM_{s}^{k-0}(x)GM_{s+1}^{0}(x)] \nonumber\\
 		&=& GM_{s+1}^{k}(x)-GM_{s}^{k}(x). \nonumber 
 	\end{eqnarray}
 \end{proof}
 \begin{theorem}
 	Let $n,m \ge 0$ such that $n+m>1$, then we have
 	\begin{equation}
 		GM_{2(n+m-1)}^{(2)}(x)-GM_{n+m}(x)GM_{n+m-2}(x)=(2^{n+m-1}-2^{n+m-2})+i(x3.2^{n+m-2})\nonumber
 	\end{equation}
 \end{theorem}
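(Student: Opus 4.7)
The plan is to mimic the proof of the earlier analogous theorem for Gaussian Mersenne numbers, since everything about this statement is a direct polynomial lift: the two ingredients we need, namely the reduction $GM_{2n}^{(2)}(x)=GM_{n}^{2}(x)$ (Proposition \ref{obsr3} specialized to $k=2$) and the Cassini identity (\ref{gaussCassinipoly}), are already in hand. So the proof will be a two-line calculation, and the only real task is bookkeeping of indices and signs.

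Concretely, I would proceed as follows. First, apply Proposition \ref{obsr3} with $k=2$ and index $n+m-1$ to collapse the generalized term,
\begin{equation}
GM_{2(n+m-1)}^{(2)}(x)=GM_{n+m-1}^{2}(x).\nonumber
\end{equation}
This turns the left-hand side of the statement into $GM_{n+m-1}^{2}(x)-GM_{n+m}(x)GM_{n+m-2}(x)$, which is precisely the negative of the Cassini expression at index $n+m-1$. Second, invoke the Gaussian Mersenne polynomial Cassini identity (\ref{gaussCassinipoly}) at that index, i.e., substitute $n\mapsto n+m-1$ into
\begin{equation}
GM_{n+1}(x)GM_{n-1}(x)-GM_{n}^{2}(x)=(2^{n-2}-2^{n-1})-i(x3.2^{n-2}),\nonumber
\end{equation}
and negate, producing the claimed right-hand side up to the index shift.

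The condition $n+m>1$ is exactly what guarantees the Cassini identity is applied at a legal index $n+m-1\geq 1$, matching the hypothesis of Theorem \ref{gaussCassinipoly}. There is no genuine obstacle here; if anything, the only thing to watch is that the exponents of $2$ in the stated right-hand side agree with what falls out of substituting $n\mapsto n+m-1$ into the Cassini formula (the analogous numerical theorem earlier in the paper was proved in exactly one line by the same recipe, so the same bookkeeping is expected to close the argument). The proof block can therefore be kept to a single displayed chain of equalities citing Proposition \ref{obsr3} and equation (\ref{gaussCassinipoly}), parallel to the earlier Gaussian Mersenne number version.
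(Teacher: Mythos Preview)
Your proposal is correct and is exactly the paper's own argument: reduce $GM_{2(n+m-1)}^{(2)}(x)$ to $GM_{n+m-1}^{2}(x)$ via Proposition~\ref{obsr3} with $k=2$, then apply the polynomial Cassini identity (\ref{gaussCassinipoly}) at index $n+m-1$ with a sign flip. The paper presents this as a single chain of equalities citing the same two ingredients, just as you suggest.
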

 \begin{proof}
 	By eqn.(\ref{gaussCassinipoly}) and proposition(\ref{obsr3}), we get
 	\begin{eqnarray}
 		GM_{2(n+m-1)}^{(2)}(x)-GM_{n+m}(x)GM_{n+m-2}(x)
 		&=& GM_{(n+m-1)}^{2}(x)-GM_{n+m}(x)GM_{n+m-2}(x)\nonumber\\
 		&=&(2^{n+m-1}-2^{n+m-2})+i(x3.2^{n+m-2}).\nonumber
 	\end{eqnarray}
 \end{proof}
 \begin{theorem}
 	Let $n,k \ge 2$, then we can write Cassini's identity for $GM_{n}^{(k)}(x)$ as,
 	$$GM_{nk+a-1}^{2}(x)-GM_{nk+a}^{(k)}(x)GM_{nk+a-2}^{(k)}(x)=
 	\begin{cases}
 		GM_{n}^{2k-2}(x)\left[(2^{n-1}-2^{n-2})+i(x3.2^{n-2})\right],~~~~ a = 1 \\
 		0, \hspace{2.8in} a \ne 1\\
 	\end{cases}$$.
 \end{theorem}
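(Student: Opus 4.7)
The plan is to mimic the proof of the analogous Cassini identity for the generalized $k$-Gaussian Mersenne numbers given earlier, replacing use of eqn.(\ref{gaussCassini}) with its polynomial counterpart, Theorem on $GM_n(x)$ (eqn.(\ref{gaussCassinipoly})). The key vehicle is the factorisation (\ref{Gaussrelationpoly}), namely $GM_{n}^{(k)}(x)=GM_{s}^{k-r}(x)\,GM_{s+1}^{r}(x)$ when $n=sk+r$, which lets the three quantities $GM_{nk+a}^{(k)}(x)$, $GM_{nk+a-1}^{(k)}(x)$, $GM_{nk+a-2}^{(k)}(x)$ be written as monomials in $GM_n(x)$ and $GM_{n+1}(x)$.

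Case 1: $a\neq 1$. For $2\le a\le k-1$ the remainders of $nk+a$, $nk+a-1$, $nk+a-2$ on division by $k$ are $a$, $a-1$, $a-2$ respectively, so eqn.(\ref{Gaussrelationpoly}) gives
\begin{eqnarray*}
GM_{nk+a}^{(k)}(x)\,GM_{nk+a-2}^{(k)}(x) &=& GM_{n}^{\,2k-2a+2}(x)\,GM_{n+1}^{\,2a-2}(x),\\
\bigl(GM_{nk+a-1}^{(k)}(x)\bigr)^{2} &=& GM_{n}^{\,2k-2a+2}(x)\,GM_{n+1}^{\,2a-2}(x),
\end{eqnarray*}
which cancel, giving $0$.

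Case 2: $a=1$. Here (\ref{Gaussrelationpoly}) yields $GM_{nk+1}^{(k)}(x)=GM_{n}^{k-1}(x)GM_{n+1}(x)$, $GM_{nk}^{(k)}(x)=GM_{n}^{k}(x)$ and $GM_{nk-1}^{(k)}(x)=GM_{n-1}(x)GM_{n}^{k-1}(x)$ (the last using $nk-1=(n-1)k+(k-1)$). Substituting and factoring out $GM_{n}^{2k-2}(x)$,
\begin{eqnarray*}
\bigl(GM_{nk}^{(k)}(x)\bigr)^{2}-GM_{nk+1}^{(k)}(x)\,GM_{nk-1}^{(k)}(x) &=& GM_{n}^{\,2k-2}(x)\bigl[GM_{n}^{2}(x)-GM_{n+1}(x)GM_{n-1}(x)\bigr],
\end{eqnarray*}
and Theorem on the Cassini identity (\ref{gaussCassinipoly}) supplies the bracketed value $(2^{n-1}-2^{n-2})+i(x3\cdot2^{n-2})$, completing this case.

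The routine but mildly fiddly point is really just bookkeeping: making sure that in the $a\neq 1$ branch the three indices all have quotient $s=n$ under division by $k$, which needs $2\le a\le k-1$ (the hypothesis $k\ge 2$ is what keeps this case non-vacuous for $a=0$ one falls back on the expansion with $s=n-1$, but the same monomial cancellation still applies). No new identity is required beyond (\ref{gaussCassinipoly}) and (\ref{Gaussrelationpoly}); the proof is essentially a verbatim polynomial analogue of the numerical Cassini identity already established, so the main obstacle is purely symbolic care with exponents and with the two sub-cases of the decomposition $n=sk+r$.
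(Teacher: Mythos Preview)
Your proposal is correct and follows essentially the same approach as the paper: both split into the cases $a=1$ and $a\neq 1$, use the factorisation (\ref{Gaussrelationpoly}) to express the three terms as monomials in $GM_{n}(x)$, $GM_{n\pm 1}(x)$, and then either observe the trivial cancellation ($a\neq 1$) or invoke the polynomial Cassini identity (\ref{gaussCassinipoly}) ($a=1$). Your bookkeeping is in fact slightly more careful than the paper's, since you note explicitly the range $2\le a\le k-1$ needed for the quotient to remain $s=n$ and flag the $a=0$ boundary.
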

 \begin{proof}
 	Let $a=1$, then using eqn.(\ref{Gaussrelationpoly}) we have,
 	\begin{eqnarray}
 		GM_{nk+1}^{(k)}(x)GM_{nk-1}^{(k)}(x)-(GM_{nk}^{(k)})^{2}(x)
 		&=&(GM_{n}^{k-1}(x)GM_{n+1}(x))(GM_{n-1}(x)GM_{n}^{k-1}(x))-(GM_{n}^{k})^{2}(x) \nonumber\\
 		&=& GM_{n}^{2k-2}(x)[GM_{n+1}(x)GM_{n-1}(x)-(GM_{n})^{2}(x)]\nonumber\\
 		&=& GM_{n}^{2k-2}(x)(2^{n-1}-2^{n-2})+i(x3.2^{n-2}).\nonumber \hspace{.5in} \text{(Using eqn.\ref{gaussCassinipoly})}\nonumber
 	\end{eqnarray}	 
 	and if $a \ne 1$, then by eqn.(\ref{Gaussrelationpoly}),
 	\begin{eqnarray}
 		GM_{nk+a}^{(k)}(x)GM_{nk+a-2}^{(k)}(x)-(GM_{nk+a-1}^{(k)})^{2}(x)
 		&=&  (GM_{n}^{k-a}(x)GM_{n+1}^{a}(x))(GM_{n}^{k-a+2}(x)GM_{n+1}^{a-2}(x)) \nonumber\\
 		& &	-(GM_{n}^{k-a+1}(x)GM_{n+1}^{a-1}(x))^{2}\nonumber\\
 		%&=& (GM_{n}^{l-l})^{2}[GM_{n+a}M_{n+a-2}-(GM_{n+a-1})^{2}]\nonumber\\
 		&=& GM_{n}^{2k-2a+2}(x)[GM_{n+1}^{2a-2}(x)-(GM_{n+1})^{2a-2}(x)] \nonumber\\
 		&=& 0.\nonumber
 	\end{eqnarray}
 \end{proof}
%%%%%%%%%%%%%%%%%%%%%%%%%%%%%%%%%%%%%%%%%%%%%%%%%%%%%%%%%%%%%%%%%%%%%%%%%%%%%%%%
	%%%%%%%%%%%%%%%%%%%%%%%%%%%%%%%%%%%%%%%%%%%%%%%%%%%%%%%%%%%%%%%%%%%%%%%%
	\subsection*{Declaration of competing interest}
	The authors declare that they have no known competing financial interests or personal relationships that could have appeared to influence the work reported in this paper.
	\subsection*{Acknowledgment}      
	The authors are thankful to anonymous reviewer for their care and advice. The first \& third author acknowledge the University Grant Commission(UGC), India for providing fellowship for this research work.
	
	\bibliography{Recurrsive}
	\bibliographystyle{acm}
\end{document}